\pgfplotsset{compat=newest}
\newlength\figurewidth
\newlength\figureheight
\definecolor{dg}{gray}{0.2}
\DeclareMathOperator{\E}{\mathbb{E}}
\DeclareMathOperator{\Bin}{Bin}
\newtheorem{theorem}{Theorem}[section]  
\newtheorem{lemma}[theorem]{Lemma}
\theoremstyle{plain}
\begin{document} 

\title{Small Deviations of Sums of Independent Random Variables} 
\author{Brian Garnett} 
\date{}

\maketitle

\begin{abstract}
A well-known discovery of Feige's is the following \cite{feige}: Let $X_1, \ldots, X_n$ be nonnegative independent random variables, with $\E[X_i] \leq 1 \;\forall i$, and let $X = \sum_{i=1}^n X_i$.  Then for any $n$, 
\[\Pr[X < \E[X] + 1] \geq \alpha > 0,\]
for some $\alpha \geq 1/13$.  This bound was later improved to $1/8$ by He, Zhang, and Zhang \cite{4mom}.  By a finer consideration of the first four moments, we further improve the bound to approximately $.14$.  The conjectured true bound is $1/e \simeq .368$, so there is still (possibly) quite a gap left to fill.  
\end{abstract}


\section{Introduction} 
  
\subsection{A Small Deviation Inequalitiy}
Let $X_1, \ldots, X_n$ be nonnegative independent random variables, with $\E[X_i] = \mu_i \leq 1$ for each $i$.  For a given constant $\delta$, we wish to establish a universal lower bound 
\begin{equation}\label{mainprob}
\Pr \left[\sum_{i=1}^n X_i < \sum_{i=1}^n \mu_i + \delta \right] \geq \alpha > 0.
\end{equation}
Feige first established a bound of this type in \cite{feige}, and He, Zhang, and Zhang later showed that for $\delta \geq 1$, $\alpha \geq 1/8$ \cite{4mom}.  However, it is believed that in this case, we can let $\alpha = 1/e$.   If so, this bound would be tight, as consider letting all $X_i$ have mean $1$ and support $\{0,n+\delta\}$.  Then 
\[\displaystyle \Pr[X_1 + \ldots + X_n < n+\delta] = \left(1-\frac{1}{n+\delta}\right)^n \longrightarrow \frac{1}{e}.\]  
On the other hand, as Feige pointed out, for smaller $\delta$, the lower bound becomes dependent on this constant. Consider $X_1$ having mean $1$ and support $\{0,1+\delta\}$, and $X_i \equiv 1$ for $i \geq 2$.  In this case,
\begin{equation}\label{1delta}
\displaystyle \Pr[X_1 + \ldots + X_n < n+\delta] = \frac{\delta}{1+\delta}.
\end{equation}
    
It is not difficult to see that for such a small deviation from the mean, Markov's and Chebyshev's inequalities are insufficient for establishing a lower bound $\alpha$ in (\ref{mainprob}) away from $0$.  Therefore, we will need to consider more information than the just the first and second moments of our random variables.

\subsection{Our Results}
In Section 3, we establish the following bound:
\begin{theorem}\label{newbound}
Let $X_1, \ldots, X_n$ be nonnegative independent random variables, with $\E[X_i] \leq 1$ for each $i$.  Let $X = \sum_{i=1}^n X_i.$  Then 
\begin{equation}\label{14}
\Pr\left[X < \E[X] + 1\right] \geq \frac{7}{50}.
\end{equation}
\end{theorem}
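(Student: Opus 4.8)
The plan is to follow the moment-based strategy that underlies both Feige's original argument and the He–Zhang–Zhang improvement, but to squeeze more out of the first four moments. Write $X = \sum_i X_i$, $\mu = \E[X] = \sum_i \mu_i$, and let $p = \Pr[X < \mu + 1]$ be the quantity we want to bound below. The key idea is that if $p$ were very small, then $X$ would concentrate its mass on $[\mu+1,\infty)$, and this forces the centered moments $\E[(X-\mu)^k]$ — especially the second and fourth — to be large. On the other hand, because the $X_i$ are independent with $\mu_i \le 1$, one can prove \emph{upper} bounds on $\Var(X)$ and on $\E[(X-\mu)^4]$ (the fourth central moment of a sum of independent variables expands into second and fourth moments of the summands, with the cross terms controlled by $\mu_i \le 1$). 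Comparing the lower bound on the moments forced by small $p$ against these structural upper bounds yields a contradiction unless $p \ge 7/50$.

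Concretely, the steps I would carry out are as follows. First, a reduction: by a standard truncation/splitting argument one may assume each $X_i$ has a two-point distribution, or at least reduce to a convenient canonical form, so that the moment computations become exact rather than merely bounded; one should also handle the easy regime where $\Var(X)$ is small (then Chebyshev already gives $p$ bounded away from $0$) separately from the regime where $\Var(X)$ is large. Second, establish the structural moment inequalities: $\Var(X) = \sum_i \Var(X_i) \le \sum_i \E[X_i^2]$, and a fourth-moment bound of the shape $\E[(X-\mu)^4] \le C_1 \sum_i \E[X_i^4] + C_2 (\sum_i \E[X_i^2])^2$ for explicit small constants, again using $\mu_i\le 1$ to absorb lower-order terms. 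Third — and this is the heart of the refinement over \cite{4mom} — set up a linear (or small linear-programming) relaxation: subject to the normalization $\E[\mathbf 1] = 1$, the mean constraint, and the derived second- and fourth-moment constraints, together with the assumption $\Pr[X < \mu+1] = p$, bound from below the "mass at/above $\mu+1$" in a way that is infeasible for $p < 7/50$. Feige used essentially a one-variable version of this; He–Zhang–Zhang used two moments more cleverly; the point here is to also track the \emph{third} moment (or equivalently the skewness), which is where the paper's phrase "finer consideration of the first four moments" comes in, and which lets the worst-case configuration be pinned down more tightly.

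The main obstacle, I expect, is the fourth step: identifying and analyzing the extremal configuration. The functional inequality one extracts is not symmetric, and the tight example \eqref{1delta} (one variable doing all the deviating, the rest deterministic) shows that the worst case is genuinely lopsided; one must verify that no other distribution of the "deviation budget" across the $X_i$ — in particular, spreading it among many summands, which improves concentration, versus concentrating it in one, which maximizes a single tail atom — does better than the bound $7/50$. This likely requires a convexity argument in each coordinate to reduce to two-point $X_i$, then a further argument (Lagrange multipliers, or direct case analysis on how many summands are non-deterministic) to reduce to a low-dimensional optimization, and finally a careful but elementary estimate showing the optimum is $\ge 7/50$. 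A secondary technical nuisance is making the $n$-uniformity clean: all moment bounds must be independent of $n$, which the independence structure gives for free, but the passage to the continuum/limit (as in the $1/e$ example) must be handled so that the bound $7/50$ holds for every finite $n$, not just asymptotically.
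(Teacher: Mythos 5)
Your plan captures the right high-level philosophy (degree-four moment method, with the skewness as the new lever), and you correctly identify Feige's reduction to two-point distributions as the starting point. But as written, the core of your plan — bound the second and fourth central moments of $X$ and feed them into an LP — would not produce a nontrivial constant, and the obstruction is not the "lopsided extremal configuration" you flag in your last step; it is that the inputs to the LP are vacuous. Since each $X_i$ is nonnegative with mean $\le 1$ but can take arbitrarily large values, the quantities $\sum_i \Var(X_i)$ and $\sum_i \E[X_i^4]$ are unbounded: a single $X_i$ supported on $\{0,M\}$ with mean $1$ has variance $M-1$ and fourth moment $\approx M^3$. The moment method (Chebyshev-type, degree four) needs something like $\E[(X-\mu)^4] \lesssim 3\sigma^4$, and this simply fails. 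This is not a small-$n$ versus large-$n$ uniformity issue; it fails already for $n=1$.

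The missing ingredient is a multiplicative splitting that the paper inherits from He–Zhang–Zhang: after Feige's reduction to two-point $X_i$ with support $\{0,c_i\}$, define surpluses $s_i = c_i - \mu_i$, order by $s_i$, fix a threshold $\tau$, and peel off the set of indices $\{1,\dots,k\}$ whose surpluses are large relative to the accumulated mean. One conditions on all of $X_1,\dots,X_k$ being zero, which costs only a factor $\ge e^{-1/\tau}$ on the probability, and what remains is a sum of mean-zero two-point variables whose positive part is now bounded by a quantity comparable to the allowed deviation. That is what makes the fourth-moment machine turn over. The second missing ingredient is the case split by the sign of $\E[(\text{remaining sum})^3]$: when it is nonnegative, one can rescale so that $|Y_j|\le 1$ and apply a refined bound $\Pr[Y\ge 4/25]\le 5/6$ (your proposal gestures at "tracking skewness," but the specific payoff — pushing the allowed deviation well below $1$ — is what you would need to extract); when it is nonpositive, a separate lemma shows the two-point summands need no upper bound at all, because a nonpositive third moment together with a mild lower bound on the smallest negative support point already controls $\sum a_i b_i^3$ by $4\sigma^3$. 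That second lemma is the structural reason the unboundedness issue can be overcome in the negatively skewed case, and it is entirely absent from your outline. Without both the $\tau$-splitting and that negative-third-moment lemma, the optimization you set up in steps three and four has no finite feasible bound to exploit.

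One further small point: your plan to dispatch small $\Var(X)$ by Chebyshev also fails for the same reason — $\Var(X)$ can be made arbitrarily large while the target probability $\Pr[X<\mu+1]$ remains bounded away from $1$ (cf. equation (1.2) in the paper). The paper instead handles different variance regimes by choosing different parameters $\ell,r$ in the dual polynomial $Q_{\ell,r}$, not by switching to Chebyshev.
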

In their approach to inequality (\ref{mainprob}), He, Zhang, and Zhang \cite{4mom} applied deviation inequalities they had developed in terms of the first, second, and fourth moments.  The source of our improvement comes from also considering the central third moment, and what happens in the cases where it is positive versus negative.  This idea is well illustrated by a (tight) moment bound we prove in Section \ref{moment}:
\begin{theorem}\label{simple}
Let $X$ be a random variable with $\E[X] = 0, \E[X^2] = \sigma^2$, and $\E[X^3] \geq 0$.  If $\E[X^4] \leq c\sigma^4$, then
\[ \Pr[X \geq 0] \leq 1 - \frac{1}{2c}. \]
\end{theorem}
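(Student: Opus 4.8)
The plan is to produce a single dual certificate: a real polynomial $f$ with $f(x)\ge \mathbf{1}_{x\ge0}$ for every $x\in\mathbb{R}$, arranged so that $\E[f(X)]$ is controlled by exactly the data we are given. Since $\Pr[X\ge0]=\E[\mathbf{1}_{X\ge0}]\le\E[f(X)]$, it then suffices that $\E[f(X)]\le 1-\frac{1}{2c}$. First I would rescale $X\mapsto X/\sigma$ (assume $\sigma>0$), which preserves $\Pr[X\ge0]$ and the condition $\E[X^3]\ge0$ and turns the fourth-moment bound into $\E[X^4]\le c$; so assume $\sigma^2=1$. If $f(x)=a_0+a_1x+a_2x^2+a_3x^3+a_4x^4$, then using $\E[X]=0$ and $\E[X^2]=1$ we get $\E[f(X)]=a_0+a_2+a_3\E[X^3]+a_4\E[X^4]$. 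To convert the two inequality hypotheses into usable bounds we need $a_3\le0$ (so the $\E[X^3]\ge0$ term only helps) and $a_4\ge0$ (so $\E[X^4]\le c$ may be substituted); the coefficients $a_0,a_1,a_2$ may have any sign, since $\E[X]$ and $\E[X^2]$ are known exactly. The target becomes: find such an $f$, majorizing $\mathbf{1}_{x\ge0}$, with $a_0+a_2+a_4c=1-\frac{1}{2c}$.

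The only genuinely creative step is guessing $f$, and for that I would read off the conjectured extremal law: the bound is met by $X$ on $\{-\sqrt c,0,\sqrt c\}$ with $\Pr[X=\pm\sqrt c]=\frac{1}{2c}$ and $\Pr[X=0]=1-\frac1c$. Complementary slackness forces $f$ to touch $\mathbf{1}_{x\ge0}$ at these three points (matching value, and matching derivative at $\pm\sqrt c$), and solving those interpolation conditions determines $f$ once the variable is rescaled by $\sqrt c$. Writing $y=x/\sqrt c$, the outcome is $f(x)=h(y)$ with
\[ h(y)=\tfrac12y^4-\tfrac14y^3-y^2+\tfrac34y+1. \]
Two polynomial identities then certify $h(y)\ge\mathbf{1}_{y\ge0}$: one checks
\[ h(y)=(y+1)^2\left(\tfrac12y^2-\tfrac54y+1\right),\qquad h(y)-1=\tfrac{y}{4}(y-1)^2(2y+3). \]
Since $\tfrac12y^2-\tfrac54y+1$ has negative discriminant it is everywhere positive, so the first identity gives $h\ge0$ on $\mathbb{R}$; the second gives $h(y)\ge1$ for $y\ge0$. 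Hence $h(y)\ge\mathbf{1}_{y\ge0}$, i.e.\ $f(x)\ge\mathbf{1}_{x\ge0}$. Expanding, the cubic coefficient of $f$ is $-\tfrac14c^{-3/2}<0$ and the quartic coefficient is $\tfrac12c^{-2}>0$, exactly as the sign requirements demand.

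Finally, take expectations. The linear term drops out as $\E[X]=0$, and $\E[X^2]=1$, so
\[ \Pr[X\ge0]\le\E[f(X)]=1-\frac1c-\frac{1}{4c^{3/2}}\E[X^3]+\frac{1}{2c^2}\E[X^4]\le 1-\frac1c+\frac{1}{2c}=1-\frac{1}{2c}, \]
using $\E[X^3]\ge0$ and $\E[X^4]\le c$ at the last step; this is the assertion. The one real obstacle is discovering $h$; past that, the proof is a pair of polynomial factorizations plus a one-line expectation. A pleasant feature of this route is that it exposes why both hypotheses are essential — dropping $\E[X^3]\ge0$ would delete the $a_3$ term and ruin the bound — and why the constant is exactly $\tfrac12$.
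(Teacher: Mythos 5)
Your proposal is correct and, after the rescaling $y = x/(\sqrt{c}\,\sigma)$, your polynomial $h$ is exactly the paper's $Q_{\sqrt{c}\sigma}$; the interpolation/complementary-slackness reasoning you use to guess it reproduces the double-root structure the paper encodes in $Q_{\ell,r}$ with $\ell=r$, and your two factorizations are the specialization of the paper's general nonnegativity lemma for $Q_{\ell,r}$. So this is essentially the paper's proof, organized a bit differently.
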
  
The assumption on the third moment allows for a slightly smaller bound than the one proved in \cite{bound}, which made no mention of the third moment (but otherwise had an identical hypothesis).  

We also consider whether we can obtain similar small deviation bounds if the random variables are only $k$-wise independent for some $k \geq 2$.  Recall that a collection of random variables is $k$-wise independent if any $k$-sized subcollection is mutually independent.  This is a natural consideration, since calculating up to the $k$th moment of a sum of independent random variables in fact only uses the assumption that they are $k$-wise independent.  In addition, for many randomized algorithms, $k$-wise independence is just as adequate as full independence, and the benefit of using the former is that it requires much less randomness to generate.  In this realm, we show that for certain types of random variables, $4$-wise independence is sufficient for a nontrivial small deviation bound.  Our most general result of this type, which we prove in Section \ref{Setup}, is
\begin{theorem}\label{third}
Let $X_1, \ldots, X_n$ be a $4$-wise independent collection of random variables where for each $i$, $\E[X_i] = 0$, and $|X_i| \leq 1$.  Let $X = \sum_{i=1}^n X_i$.  Then if $\delta \geq 1/3$,
\[ \Pr[X < \delta] \geq \frac{1}{6}. \]
\end{theorem}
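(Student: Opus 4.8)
The plan is to bound $\Pr[X \geq \delta]$ from above by $5/6$ using a fourth-moment argument, exploiting $4$-wise independence to compute $\E[X^2]$ and $\E[X^4]$ as if the $X_i$ were fully independent. First I would set $\sigma_i^2 = \E[X_i^2] \leq 1$ and $\sigma^2 = \sum_i \sigma_i^2 = \E[X^2]$, and expand $\E[X^4] = \sum_{i} \E[X_i^4] + 3\sum_{i \neq j} \sigma_i^2 \sigma_j^2$ (all other terms vanish by $4$-wise independence and the mean-zero assumption). Using $|X_i| \leq 1$ we get $\E[X_i^4] \leq \E[X_i^2] = \sigma_i^2$, so $\E[X^4] \leq \sigma^2 + 3\sigma^4$. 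The idea is then to feed this into a Chebyshev-type bound on $\Pr[X \geq \delta]$. To see the role of $\delta \geq 1/3$: the worst case is when $\sigma^2$ is small, and the constraint $\mu_i = 0$, $|X_i| \leq 1$ forces a relationship — in particular, for the bound to be nontrivial the relevant quantity must be controlled when $\sigma$ is comparable to $\delta$.

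The key step is choosing the right inequality. I would apply a one-sided moment inequality of the form: for any random variable $Y$ with $\E[Y] = 0$, $\E[Y^2] = \sigma^2$, $\E[Y^4] \leq M$,
\[ \Pr[Y \geq t] \leq \frac{M + \sigma^2 t^2 \cdot (\text{something})}{(\text{something involving } t)}, \]
more precisely the Paley--Zygmund-style or Cauchy--Schwarz-style bound obtained by writing $\Pr[Y \geq t] \le \Pr[(Y - s)^2 \cdot \mathbf{1}_{Y \ge t} \ge \dots]$ for a well-chosen shift $s$, or simply optimizing $\Pr[Y \ge t] \le \E[(Y+a)^2]/(t+a)^2$ over $a > 0$ when the second moment alone suffices, then upgrading to fourth moments to shave the constant. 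In our situation applying the second-moment version gives $\Pr[X \geq \delta] \leq \sigma^2/(\delta^2 + \sigma^2)$ after optimizing the shift, and this already exceeds $5/6$ when $\sigma^2 \geq 5\delta^2$, so a pure second-moment argument fails and the fourth moment is genuinely needed. I would therefore use the fourth-moment refinement together with $\E[X^4] \le \sigma^2 + 3\sigma^4$ to obtain a bound that stays below $5/6$ for all $\sigma^2$, given $\delta \geq 1/3$; this is essentially where Theorem~\ref{simple}-type reasoning enters, possibly after centering/reducing to the case handled there.

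The main obstacle I anticipate is the regime of large $\sigma^2$: when the variance is huge, fourth-moment tail bounds degrade, and one must use the structural constraint $|X_i| \leq 1$ more cleverly — e.g., the fact that $\E[X^4] \leq \sigma^2 + 3\sigma^4$ has the "extra" linear term $\sigma^2$ which is negligible compared to $3\sigma^4$ when $\sigma$ is large, so the kurtosis is close to $3$, and a Gaussian-like heuristic suggests $\Pr[X \ge \delta] \to 1/2 < 5/6$; making this rigorous for all $n$ and all valid distributions (not just asymptotically) is the delicate part. A secondary obstacle is handling small $\sigma^2$: here one should argue that $\Pr[X \ge \delta]$ is bounded away from $1$ because $X$ is concentrated near $0$ and $\delta \ge 1/3 > 0$, which again comes down to a careful Chebyshev estimate with the correct shift. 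I expect the proof to split into these two ranges of $\sigma^2$ with a threshold chosen to make both estimates yield $\le 5/6$, the threshold calculation being the one routine-but-essential computation, and the constant $1/3$ emerging as exactly what is needed for the two pieces to meet.
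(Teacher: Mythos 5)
Your setup is on the right track: you correctly identify the fourth-moment framework, derive $\E[X^4] \leq \sigma^2 + 3\sigma^4$ from $4$-wise independence and boundedness, and anticipate a case split over ranges of $\sigma^2$. But there are two substantive gaps between what you propose and what actually works.

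\textbf{You never use the third moment, and you need it.} Alongside the fourth-moment bound, boundedness also gives $\E[X^3] \geq -\sigma^2$ (indeed $|\E[X^3]| = |\sum_i \E[X_i^3]| \le \sum_i \E[X_i^2] = \sigma^2$). The paper's proof relies on this in an essential way: the majorizing polynomial it uses has a nonzero cubic coefficient, and the bound $\E[X^3] \geq -\sigma^2$ is precisely what controls the resulting odd-order term. This is not just convenient. With only second and fourth moments, the sharp one-sided tail bound in the regime $\E[X^4] \leq c\sigma^4$ is $\Pr[X \geq 0] \leq 1 - (2\sqrt{3}-3)/c$ (cited in the paper), which for $c = 3$ gives roughly $0.845 > 5/6$. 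For large $\sigma$, the extra $\sigma^2$ in $\E[X^4] \leq 3\sigma^4 + \sigma^2$ and the fixed shift $\delta = 1/3$ both wash out, so you genuinely land in that regime, and the target $5/6$ is unreachable without the third-moment information. Your large-$\sigma$ heuristic (``kurtosis close to $3$, Gaussian-like, $\to 1/2$'') is too optimistic: the worst case compatible with $\E[X] = 0$, $\E[X^2] = \sigma^2$, $\E[X^4] \approx 3\sigma^4$ but no third-moment control is a left-skewed three-point distribution, not a near-Gaussian, and it saturates the $0.845$ bound.

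\textbf{The key inequality is left unspecified, and the Cantelli-with-shift route is the wrong template.} You gesture at ``Paley--Zygmund-style or Cauchy--Schwarz-style'' bounds and at optimizing $\E[(Y+a)^2]/(t+a)^2$, but these are ratio-type bounds, and they do not extend cleanly to fourth moments in a way that naturally incorporates a third-moment constraint. What the paper does instead is the dual moment-problem construction: build an explicit degree-$4$ polynomial $Q_{\ell,r}$ with $Q_{\ell,r}(0)=1$, a double root at $-\ell$, and $Q_{\ell,r} - 1$ having a double root at $r$, so that $Q_{\ell,r}(x) \geq \mathbf{1}_{\{x \geq 0\}}$ pointwise. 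Then $\Pr[X \geq \delta] \leq \E[Q_{\ell,r}(X-\delta)]$, which is a linear functional of the first four moments, and one feeds in the bounds on $\E[X^2]$, $\E[X^3]$, $\E[X^4]$ with $\ell, r$ tuned to the range of $\sigma$. Finally, the case split is considerably finer than a two-piece large/small-$\sigma$ dichotomy: the paper uses one parameter choice for $\sigma \geq 3$, three more (exploiting convexity of the resulting bound in $\sigma^{-1}$) to cover $\sigma \in [1/2, 3]$, and a final constant choice $r = 3/2$ for $\sigma < 1/2$. Your ``threshold where the two pieces meet'' framing underestimates how much of the work is in this parameter-tuning across overlapping intervals.
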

In \cite{me}, we showed that $1/6$ is the best possible constant bound for this theorem.  Similar to the conjectured $1/e$ lower bound to (\ref{mainprob}) when $\delta \geq 1$, the bound $1/6$ cannot be improved by raising $\delta$ to a higher constant.  But in this case the bound does not hold when $\delta < 1/5$, due to the same example that produces (\ref{1delta}).  Thus, there may be some slight room for improvement to the above theorem, but not much.  But as we will see in our approach to Theorem \ref{newbound}, letting $\delta$ be as small as possible is a worthwhile endeavor. 

In Section \ref{23wise}, we present a counterexample to show that $3$-wise independence is insufficient for any nontrivial small deviation bound on a sum of random variables.  This settles a question in \cite{4mom}, regarding whether or not a nontrivial bound can be obtained from only the first, second, and third moments.  In addition, the assumption of pairwise independence does not lead to an improvement on Markov's inequality for a deviation bound on a sum of nonnegative random variables.

\begin{theorem}\label{2wisethm} 
Let $\delta > 0$.  If $(n+\delta)/(\delta+1) \in \mathbb{Z}$, then there exists a collection of nonnegative pairwise independent random variables $X_1, \ldots, X_n,$ each with mean 1 such that
\[\Pr[X_1 + \ldots + X_n < n+\delta] = \frac{\delta}{n+\delta}\]
\end{theorem}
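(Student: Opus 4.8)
The plan is to exhibit an explicit example in which the sum $X_1+\cdots+X_n$ takes only the two values $0$ and $n+\delta$. Set $m = (n+\delta)/(\delta+1)$, which is a positive integer by hypothesis and satisfies $1 \le m \le n$ (both inequalities reduce to $n \ge 1$). Each $X_i$ will be supported on $\{0,\,1+\delta\}$, so the event $\{X_1+\cdots+X_n < n+\delta\}$ will be exactly the event that every $X_i$ equals $0$, and the construction will be arranged so that this event has probability $\delta/(n+\delta)$.

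Concretely, I would take a Bernoulli random variable $Y$ with $\Pr[Y=1] = n/(n+\delta)$, and independently a uniformly random $m$-element subset $S \subseteq \{1,\dots,n\}$, and then set $X_i = (1+\delta)\,Y\cdot\mathbf{1}[i \in S]$. In words: with probability $\delta/(n+\delta)$ every $X_i$ is $0$, and otherwise a uniformly chosen collection of exactly $m$ of the variables equals $1+\delta$ while the remaining $n-m$ are $0$.

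The verification then splits into three short computations. First, $\Pr[X_i = 1+\delta] = \tfrac{n}{n+\delta}\cdot\tfrac{m}{n} = \tfrac{m}{n+\delta} = \tfrac{1}{1+\delta}$, the last step being the identity $m(1+\delta) = n+\delta$; hence $\E[X_i] = 1$. Second, for $i \ne j$ one has $\Pr[X_i = 1+\delta,\,X_j = 1+\delta] = \tfrac{n}{n+\delta}\cdot\tfrac{m(m-1)}{n(n-1)}$, and a one-line manipulation again using $m(1+\delta)=n+\delta$ shows this equals $\tfrac{1}{(1+\delta)^2} = \Pr[X_i=1+\delta]\,\Pr[X_j=1+\delta]$; since each $X_i$ is two-valued, matching this single joint probability forces $X_i$ and $X_j$ to be independent, so the family is pairwise independent. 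Third, on $\{Y=1\}$ the sum equals $(1+\delta)|S| = (1+\delta)m = n+\delta$, while on $\{Y=0\}$ it equals $0$, so $\Pr[X_1+\cdots+X_n < n+\delta] = \Pr[Y=0] = \delta/(n+\delta)$.

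I do not expect a genuine obstacle here; the only nonroutine step is spotting the right shape of the example — forcing the sum to be two-valued, using the uniform distribution on $m$-subsets to get exactly the right pairwise (second-moment) balance, and letting the global coin $Y$ absorb the gap between $\E[\sum_i \mathbf{1}[i\in S]] = m$ and the target total mean $n/(1+\delta)$. It is worth noting where the hypothesis $(n+\delta)/(\delta+1)\in\mathbb{Z}$ is used: it makes $m$ an integer so that $m$-subsets exist, and, through $m(1+\delta)=n+\delta$, it is precisely what makes the pairwise-independence identity hold exactly — which also indicates why one should not expect such a clean statement without a divisibility assumption of this kind.
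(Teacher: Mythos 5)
Your construction is correct, and in fact it is exactly the example the paper produces, just described from a different angle. The paper works with $\{0,1\}$-valued Bernoulli variables with marginal $p=1/(\delta+1)$ and determines the distribution of the sum $S$ to be supported on $\{0,m\}$ with $\Pr[S=0]=\delta/(n+\delta)$ and $\Pr[S=m]=n/(n+\delta)$, where $m=(n+\delta)/(\delta+1)$; combined with exchangeability this is precisely your global coin $Y$ followed by a uniform $m$-subset, and scaling by $1+\delta$ recovers your mean-$1$ variables supported on $\{0,1+\delta\}$.

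The route is genuinely different, though. The paper arrives at the example by solving the linear program over all exchangeable pairwise-independent Bernoulli families (its $Z_P$ and $Z_D$), exhibits a dual-feasible polynomial $f(x)=\big((m+n)x-x^2\big)/(mn)$ with matching value, and then specializes $p=1/(d+1)$. That machinery establishes more than the theorem asserts: it shows $\delta/(n+\delta)$ is the \emph{smallest possible} value of this lower-tail probability over all such families, not merely that it is achieved. Your argument is shorter and self-contained --- you simply exhibit the example and verify the first two joint moments --- which is entirely sufficient for the existence claim of Theorem \ref{2wisethm}. The observation that for two-valued variables it suffices to check the single cell $\Pr[X_i=1+\delta,\,X_j=1+\delta]$ is a clean shortcut, and your identity $m(1+\delta)=n+\delta$ is exactly what makes both the marginal and the joint computation come out. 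The one thing you give up relative to the paper's LP framework is the optimality statement (and the unified treatment that later carries over to the $3$-wise case), but that is outside the scope of the theorem as stated.
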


\begin{theorem}\label{3wisethm}
Let $\delta > 0.$ If $(n+\delta)/(\delta+2) \in \mathbb{Z}$, then there exists a collection of nonnegative $3$-wise independent random variables $X_1, \ldots, X_n,$ each with mean 1 such that \[ \Pr[X_1 + \ldots + X_n < n+\delta] = \frac{(\delta+1)^2}{(\delta+2)(n+\delta)}.\]
\end{theorem}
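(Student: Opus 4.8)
The plan is to build the $X_i$ as a scaled, symmetric $\{0,1\}$-valued random vector that mixes three ``levels'' --- the empty configuration, a uniform $m$-subset, and the full configuration --- which is the natural extension of the two-level mixture (empty / uniform $m$-subset) underlying Theorem~\ref{2wisethm}. Set $\rho = 1/(\delta+2)$ and $m = (n+\delta)/(\delta+2)$; by hypothesis $m$ is a positive integer, and one checks that the hypothesis forces $n \ge 2$, whence $1 \le m < n$. Let $p_m, p_n$ be the unique solution of the linear system $m\,p_m + n\,p_n = n\rho$ and $\binom{m}{2} p_m + \binom{n}{2} p_n = \binom{n}{2}\rho^2$ --- explicitly $p_n = \rho^2/(n-m)$ and $p_m = n(n-1)\rho(1-\rho)/\big(m(n-m)\big)$ --- and put $p_0 = 1 - p_m - p_n$. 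Since $\delta>0$ forces $\rho \in (0,1/2)$ and $1\le m<n$, all three quantities lie in $(0,1)$, so we may form the random vector $(\xi_1,\dots,\xi_n)\in\{0,1\}^n$ as follows: with probability $p_0$ all coordinates are $0$; with probability $p_m$ the vector is the indicator of a uniformly random $m$-element subset of $\{1,\dots,n\}$; with probability $p_n$ all coordinates are $1$. Finally set $X_i = (\delta+2)\,\xi_i$.

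I would then verify the three required properties. First, $\E[\xi_i] = p_m\,(m/n) + p_n = (m p_m + n p_n)/n = \rho$ (the first defining equation), so $\E[X_i] = (\delta+2)\rho = 1$. Second, because each $\xi_i$ is $\{0,1\}$-valued, inclusion--exclusion over subsets of size at most $3$ shows that the joint law of any three of the $\xi_i$ coincides with that of three independent Bernoulli$(\rho)$ variables as soon as $\Pr[\xi_i=1]=\rho$, $\Pr[\xi_i=\xi_j=1]=\rho^2$, and $\Pr[\xi_i=\xi_j=\xi_k=1]=\rho^3$ for all distinct $i,j,k$. The first two hold by the choice of $p_m,p_n$ (e.g.\ $\Pr[\xi_i=\xi_j=1] = p_m\tfrac{m(m-1)}{n(n-1)} + p_n = \rho^2$ is just the second defining equation rewritten), and the third, namely $p_m\tfrac{m(m-1)(m-2)}{n(n-1)(n-2)} + p_n = \rho^3$, reduces after substituting the formulas for $p_m,p_n$ to
\[ (m-1)(m-2)(1-\rho) + (n-2)\rho = (n-2)(n-m)\rho^2. \]
This is exactly the place where the hypothesis $m = (n+\delta)/(\delta+2)$ enters: it yields the relations $n-2 = (m-1)/\rho$ and $n-m = \big((m-1)(1-\rho)+\rho\big)/\rho$, and substituting these collapses the displayed equation to the trivial identity $-(1-\rho) = -(1-\rho)$. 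I expect this computation --- together with the prior step of guessing that the support $\{0,m,n\}$ is the right ansatz (and, in particular, that adjoining the ``full'' level is what accommodates the third moment while keeping the subset size an integer) --- to be the only real obstacle; everything else is bookkeeping.

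Finally I would compute the deviation probability. We have $\sum_i X_i = (\delta+2)\sum_i \xi_i$, which takes only the three values $0$, $m(\delta+2)$, and $n(\delta+2)$. Since $m(\delta+2) = n+\delta$ and $n(\delta+2) = n+\delta + \big(\delta(n-1)+n\big) > n+\delta$, the event $\{\sum_i X_i < n+\delta\}$ is precisely the event that all $\xi_i=0$, so $\Pr[\sum_i X_i < n+\delta] = p_0$. It remains to simplify $p_0 = 1 - p_m - p_n$: using $1+\delta = (1-\rho)/\rho$, $\delta+2 = 1/\rho$, $n+\delta = m/\rho$, together with the same two relations as above, a short calculation gives $p_0 = (1-\rho)^2/m = \dfrac{(\delta+1)^2}{(\delta+2)(n+\delta)}$, which is the claimed value, completing the proof.
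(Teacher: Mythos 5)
Your proof is correct, and it takes a somewhat different route from the paper's. You construct the random vector explicitly as a mixture of three configurations (all-zero, a uniformly random $m$-subset, all-one), compute $p_m,p_n,p_0$ from the first- and second-order product moments, and then verify $3$-wise independence directly by checking the third-order product moment identity $p_m\,\tfrac{m(m-1)(m-2)}{n(n-1)(n-2)}+p_n=\rho^3$, which (as you note) collapses under the key relation $m-1=(n-2)\rho$ forced by the hypothesis $m=(n+\delta)/(\delta+2)$. I checked your algebra, including $p_n=\rho^2/(n-m)$, $p_m=n(n-1)\rho(1-\rho)/(m(n-m))$, $p_0=(1-\rho)^2/m$, and the claimed collapse of the cubic identity, and it all holds; one also needs $1\le m<n$, which you correctly observe follows from integrality and $\delta>0$. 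The paper instead reaches the same distribution by setting up the linear-programming primal/dual for $k$-wise independent Bernoulli variables, exhibiting the optimal dual cubic $g$ with $g(0)=0$, $g(m)=g(n-1)=g(n)=1$, and reading off the primal optimum (whose $p_{n-1}$ coordinate vanishes at $p=1/(\delta+2)$, giving support $\{0,m,n\}$). The two approaches produce the identical law for $S=\sum X_i$; the paper's LP route additionally certifies that this family is extremal among all $3$-wise independent families with these marginals, a stronger statement your existence proof does not need, while your approach is more self-contained, avoids LP duality, and makes the joint distribution of the $X_i$ themselves (not just of their sum) fully explicit.
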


\section{Setup}\label{Setup}

\subsection{A Moment Problem}\label{moment}
Let $X$ be a real-valued random variable.  Given information of the moments of $X$ up to some $k$, we want to bound the probability that $X$ lies in a set $S$.  This is a well-studied optimization problem that gives rise to an elegant dual problem, first utilized in \cite{isii} and \cite{karlin}, and treated extensively in \cite{bert}.  The setup of the general problem is
\begin{equation*}
\begin{aligned}
& \underset{X}{\text{maximize}}
& & \Pr[X \in S]  \\
& \text{subject to}
& & \E[X^i] = M_i, \; 0 \leq i \leq k.
\end{aligned}
\end{equation*}
Of course, $M_0 = 1$ always. The dual problem is then
\begin{equation*}
\begin{aligned}
& \underset{y}{\text{minimize}}
& & \sum_{i=0}^k q_i M_i  \\
& \text{subject to}
& & \sum_{i=0}^k q_i x^i \geq {\bf 1}_{\{x \in S\}}, \; \forall x \in \mathbb{R} 
\end{aligned}
\end{equation*}
In other words, this minimizes $\E[Q(X)]$ over all polynomials $Q$ of degree up to $k$, where $Q \geq {\bf 1}_S.$

For most of this paper, we let $k = 4$ and $S = \{x: x \geq \E[X] + \delta\}$.  Without loss of generality, assume $M_1 = \E[X] = 0$.  Thus for the dual problem, we need a polynomial $Q$ of degree at most $4$ such that $Q(x) \geq {\bf 1}_{\{x \geq \delta \}}(x)$ for all $x$.  The polynomial, which we will denote $Q_{\ell,r}$ for $\ell, r > 0$, we use throughout the paper will have the following properties:
\begin{figure}
  \centering
  \begin{tikzpicture}
    \begin{axis}[
        thick,
        color=dg,
        tick style={semithick, color=dg},
        tickwidth=0.3cm,
        width=\figurewidth,
        height=\figureheight,
        xmin=-1.6,
        xmax=1.6,
        ymin=-0.5,
        ymax=2.5,
        xtick={-1, 1},
        xticklabels={$-\ell$, $r$},
        ytick={1},
        y tick label style={yshift=2ex},
        legend style={draw=none},
        axis lines=middle
        ]
      \addplot[black, solid, very thick, <->]
      expression[domain=\pgfkeysvalueof{/pgfplots/xmin}+0.05:%
                        \pgfkeysvalueof{/pgfplots/xmax}-0.05,
                 samples=500]
      {0.5*x^4 - 0.25*x^3 - x^2 + 0.75*x + 1};
      \draw[dashed, thin, color=dg]
      (axis cs:\pgfkeysvalueof{/pgfplots/xmin},1) --
      (axis cs:\pgfkeysvalueof{/pgfplots/xmax},1);
    \end{axis}
  \end{tikzpicture}
  \caption{$Q_{\ell,r}$}
\end{figure}
\begin{itemize}
\item $Q_{\ell,r}(x)$ has a double root at $x = -\ell$.
\item $Q_{\ell,r}(0) = 1$ (we will often just shift by the small value $\delta$ when needed).
\item $Q_{\ell,r}(x) - 1$ has a double root at $x = r$.  
\end{itemize}
Most often, $r = \ell$, in which case we will denote it as $Q_r$.  In that case,
\begin{equation}\label{qr}
Q_r(x) = 1 + \frac{3}{4r} x - \frac{1}{r^2} x^2 - \frac{1}{4r^3} x^3 + \frac{1}{2r^4} x^4.
\end{equation}
We first use this approach to prove Theorem \ref{simple}, restated here:
\begin{theorem}
Let $X$ be a random variable with $\E[X] = 0, \E[X^2] = \sigma^2$, and $\E[X^3] \geq 0$.  If $\E[X^4] \leq c\sigma^4$, then
\[ \Pr[X \geq 0] \leq 1 - \frac{1}{2c}. \]
\end{theorem}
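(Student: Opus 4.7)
The plan is to apply the moment dual from Section~\ref{Setup} with $S = \{x \geq 0\}$, $k = 4$, and witness polynomial $Q = Q_r$ from equation~(\ref{qr}), where $r > 0$ will be optimized at the end. Because $Q_r$ is built to have a double root at $-r$ and to satisfy $Q_r(0) = 1$ with $Q_r - 1$ having a double root at $r$, one expects $Q_r(x) \geq \mathbf{1}_{\{x \geq 0\}}(x)$ for all real $x$. The first step is to verify this dominance cleanly: factor $Q_r(x) - 1 = \frac{x(x-r)^2(2x+3r)}{4r^4}$, which is $\geq 0$ on $[0,\infty)$, and factor $Q_r(x) = \frac{(x+r)^2(x^2 - \tfrac{5r}{2}x + 2r^2)}{2r^4}$, whose quadratic factor has negative discriminant and so is everywhere positive. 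Thus $Q_r$ is a legitimate feasible dual certificate.

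Taking expectations then yields
\[
\Pr[X \geq 0] \;\leq\; \E[Q_r(X)] \;=\; 1 - \frac{\sigma^2}{r^2} - \frac{\E[X^3]}{4r^3} + \frac{\E[X^4]}{2r^4},
\]
using $\E[X] = 0$. Here is where the third-moment hypothesis earns its keep: since $\E[X^3] \geq 0$ and the coefficient $-1/(4r^3)$ is negative, we can discard the cubic term for free, and then bound the quartic term via $\E[X^4] \leq c\sigma^4$ to obtain
\[
\Pr[X \geq 0] \;\leq\; 1 - \frac{\sigma^2}{r^2} + \frac{c\sigma^4}{2r^4}.
\]

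The final step is to optimize over $r > 0$. Treating the right-hand side as a quadratic in $u = 1/r^2$ and setting the derivative to zero gives $r^2 = c\sigma^2$, and substituting back produces exactly $1 - \tfrac{1}{c} + \tfrac{1}{2c} = 1 - \tfrac{1}{2c}$, which is the desired bound.

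The only nontrivial step is the pointwise domination $Q_r \geq \mathbf{1}_{\{x \geq 0\}}$, which is the main obstacle but reduces to the explicit factorizations above. Everything else is the standard expectation-of-polynomial move plus a one-variable optimization. The role of the sign assumption $\E[X^3] \geq 0$ is exactly to let us drop the cubic contribution of $Q_r$; without it, the bound would weaken (and indeed the paper~\cite{bound} obtains a correspondingly weaker constant).
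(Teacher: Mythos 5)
Your proof is correct and essentially matches the paper's: the paper plugs in $r=\sqrt{c}\,\sigma$ directly (the same value you obtain by optimizing over $r$) and appeals to its general $Q_{\ell,r}\geq \mathbf{1}_{\{x\geq 0\}}$ lemma, whereas you verify the dominance by the explicit factorizations $Q_r(x)=\frac{(x+r)^2\bigl(x^2-\tfrac{5r}{2}x+2r^2\bigr)}{2r^4}$ and $Q_r(x)-1=\frac{x(x-r)^2(2x+3r)}{4r^4}$, both of which check out. These are presentational differences only; the certificate polynomial, the use of $\E[X^3]\geq 0$ to discard the cubic term, and the final bound are identical.
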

\begin{proof}
Consider the polynomial 
\[ Q(x) = Q_{\sqrt{c} \sigma}(x) = 1 + \frac{3}{4\sqrt{c}\sigma}x - \frac{1}{c \sigma^2}x^2 - \frac{1}{4c^{3/2}\sigma^3}x^3 + \frac{1}{2c^2\sigma^4}x^4, \]
which satisfies $Q(x) \geq 1_{\{x \geq 0\}}$ for all $x$ (we prove this for the more general expression of $Q_{\ell,r}$ in the next subsection).  Using the assumptions on the moments, we have
\[\Pr[X \geq 0] = \E[1_{X\geq 0}] \leq \E[Q(X)] \leq 1 - \frac{1}{c \sigma^2}\sigma^2 +  \frac{1}{2c^2\sigma^4}c\sigma^4 = 1 - \frac{1}{2c}.\]
\end{proof}
Note that the bound is tight if we consider, for any $a > 0$ and $p < 1/2$, 
\begin{equation}\label{ab}
 X = \begin{cases} -a, &\textrm{ with probability } p \\
0, &\textrm{ with probability } 1-2p \\
a, &\textrm{ with probability } p
\end{cases} 
\end{equation}
This happens to also be a tight example to Chebyshev's inequality.  Without any assumption on the third moment, He et al proved an upper bound of $1-(2\sqrt{3}-3)/c$ \cite{bound}.  Using our $Q_{\ell, r}$, we can choose $\ell = (1+\sqrt{3})r/2$ (which makes the degree-$3$ coefficient $0$) and optimize over $r$, to get the same bound.  

Many of our proofs will be of the same flavor as Theorem \ref{simple}, with $c = 3$ (which, not coincidentally, is the kurtosis of the normal distribution).  However, two complications will often arise, which one can predict by examining the idealistic conditions of the previous theorem.  Namely, the third moment could be negative, and the fourth moment may be a bit larger than $c\sigma^4$ for the optimal $c$ we are after.  Consider, for example, a sum of bounded independent random variables.

Let $\{X_i\}_{i \leq 1 \leq n}$ be independent random variables with $\E[X_i] = 0$ for each $i$. Let $X = \sum_{i=1}^n X_i.$  If $|X_i| \leq 1$ for each $i$, then
\begin{align*}
\left|\E[X^3]\right| &=  \left| \sum_{i=1}^n \E[X_i^3] \right| \\
&\leq \sum_{i=1}^n \E[|X_i|^3] \\
&\leq \sum_{i=1}^n \E[X_i^2] \\
&= \E[X^2].
\end{align*}
In addition,
\begin{align*}
\E[X^4] &= \sum_{i=1}^n \E[X_i^4] + 6 \sum_{i < j} \E[X_i^2]\E[X_j^2] \\
&= 3\left(\sum_{i=1}^n \E[X_i^2]\right)^2 + \sum_{i=1}^n \left(\E[X_i^4] - 3\E[X_i^2]^2 \right)\\
&= 3\E[X^2]^2 + \sum_{i=1}^n \left( \E[X_i^4] - 3\E[X_i^2]^2 \right) \\
&\leq 3\E[X^2]^2 + \sum_{i=1}^n \E[X_i^2] \\
&= 3\E[X^2]^2 +\E[X^2].
\end{align*}
So we see in this case that even if $\E[X^3]$ is negative, it can only be as low as $-\E[X^2]$, and $\E[X^4]$ can only exceed $3\E[X^2]^2$ by as much as $\E[X^2]$.  This will not present much of a problem asymptotically when the variance is large, but it will cause issues for small variances.  In that case, we just modify the polynomial.  However, in general, we cannot achieve the constant upper bound of $5/6$ unless we allow some deviation $\delta > 0$.  

Note that if $\{X_i\}_{i \leq 1 \leq n}$ are only $4$-wise independent, then $X$ will have the same moments above.  Since we will only use the first four moments of $X$ to prove Theorem \ref{third} and the related Lemmas in Section 3, we can assume the random variables are only $4$-wise independent.  In each situation, we will use this information on the moments to show there exist $\ell, r >0$ such that

\begin{equation}\label{Q56}
\E[Q_{\ell, r}(X-\delta)] \leq \frac{5}{6}. 
\end{equation}
Therefore, 
\begin{equation}\label{P56}
\Pr[X \geq \delta] = \E[{\bf 1}_{\{x \geq \delta\}}(X)] \leq \E[Q_{\ell, r}(X-\delta)] \leq \frac{5}{6},
\end{equation}
where the first inequality is shown in the next section.

\subsection{$Q_{\ell,r}$}\label{Qlr}
The polynomial $Q_{\ell,r}$ described in the previous section is explicitly given as
\begin{equation}\label{qlr}
Q_{\ell,r}(x) = \sum_{i=0}^n q_i x^i,
\end{equation}
where
\begin{align}\label{qis}
q_0 &= 1, \nonumber \\
q_1 &= \frac{2r^2(2\ell+r)}{\ell(\ell+r)^3}, \nonumber \\
q_2 &= \frac{r(-8\ell^2-\ell r + r^2)}{\ell^2(\ell+r)^3}, \nonumber \\
q_3 &= \frac{4\ell^2-4\ell r-2r^2}{\ell^2(\ell+r)^3}, \nonumber \\
q_4 &= \frac{3\ell+r}{\ell^2(\ell+r)^3}.
\end{align}
If $\ell = r$, then these coefficients simplify to
\begin{equation}\label{qisr}
q_0 = 1, \;
q_1 = \frac{3}{4r}, \;
q_2 = - \frac{1}{r^2} , \;
q_3 = - \frac{1}{4r^3}, \;
q_4 = \frac{1}{2r^4}.
\end{equation}

We will show directly that this polynomial satisfies 
\begin{lemma}
Let $\ell, r > 0$.  For all $x \in \mathbb{R}$, $Q_{\ell,r}(x) \geq 1_{\{x \geq 0\}}$.
\end{lemma}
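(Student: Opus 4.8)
The plan is to exploit the three defining geometric properties of $Q_{\ell,r}$ to obtain two explicit factorizations, one tailored to the region $x < 0$ and one to the region $x \geq 0$.

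First, note that the leading coefficient $q_4 = (3\ell+r)/(\ell^2(\ell+r)^3)$ is strictly positive for $\ell, r > 0$. Since $Q_{\ell,r}$ has a double root at $x = -\ell$, we may write
\[
Q_{\ell,r}(x) = q_4 (x+\ell)^2 (x^2 + bx + c)
\]
for constants $b,c$ obtained by matching coefficients (equivalently, by reading $q_0$ and $q_3$ off of (\ref{qis})); one finds $c = (\ell+r)^3/(3\ell+r) > 0$ and $b = -2(\ell^2 + 3\ell r + r^2)/(3\ell+r)$. A direct computation then gives
\[
b^2 - 4c = \frac{-4\ell^2(2\ell^2 + 4\ell r + r^2)}{(3\ell+r)^2} < 0,
\]
so the quadratic factor $x^2 + bx + c$ is strictly positive for every real $x$. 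Hence $Q_{\ell,r}(x) \geq 0$ for all $x$; in particular this covers $x < 0$.

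Second, to handle $x \geq 0$ we must show $Q_{\ell,r}(x) - 1 \geq 0$ there. Since $Q_{\ell,r}(x) - 1$ has a double root at $x = r$ and also vanishes at $x = 0$ (because $Q_{\ell,r}(0) = 1$), it must factor as
\[
Q_{\ell,r}(x) - 1 = q_4\, x\, (x-r)^2 (x + d)
\]
for some constant $d$; matching the cubic coefficient yields $d = 2\ell(2\ell+r)/(3\ell+r) > 0$. For $x \geq 0$ every factor on the right is nonnegative ($x \geq 0$, $(x-r)^2 \geq 0$, $x+d > 0$) and $q_4 > 0$, so $Q_{\ell,r}(x) - 1 \geq 0$, i.e.\ $Q_{\ell,r}(x) \geq 1$. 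Combining the two cases gives $Q_{\ell,r}(x) \geq {\bf 1}_{\{x \geq 0\}}$ for all $x$.

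Both factorizations are essentially forced by the stated properties of $Q_{\ell,r}$, so the bulk of the work is routine coefficient bookkeeping. The one genuinely nontrivial point is the discriminant inequality $b^2 - 4c < 0$: it is not obvious a priori that the residual quadratic has no real roots (geometrically, that the quartic does not dip below zero away from its touchpoint at $x=-\ell$), and this is exactly where the specific algebraic form of the coefficients in (\ref{qis}) is used. I expect that verification to be the main — and essentially the only — obstacle; once the two factored forms are in hand, the sign analysis is immediate.
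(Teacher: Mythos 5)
Your proposal is correct and is essentially the paper's own proof: it uses the same two factorizations, $Q_{\ell,r}(x) = q_4(x+\ell)^2(x^2+bx+c)$ and $Q_{\ell,r}(x)-1 = q_4\,x(x-r)^2(x+d)$, and your discriminant computation $b^2 - 4c = -4\ell^2(2\ell^2+4\ell r+r^2)/(3\ell+r)^2 < 0$ is exactly the same algebra as the paper's evaluation of the quadratic factor at its vertex (the paper computes $(3\ell+r)(\ell+r)^3 - (\ell^2+3\ell r+r^2)^2 = \ell^2(2\ell^2+4\ell r+r^2) > 0$, which is the negation of your numerator).
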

\begin{proof}
\[Q_{\ell,r}(x) = \frac{1}{\ell^2 (\ell+r)^3} (\ell+x)^2 \big((\ell+r)^3 - 2(\ell^2+3\ell r + r^2) x + (3 \ell + r) x^2\big),\]
which is zero if $x=-\ell$. Otherwise, since 
\begin{align*}
\frac{\ell^2 (\ell+r)^3Q_{\ell,r}(x)}{(\ell+x)^2} &= (\ell+r)^3 - 2(\ell^2+3\ell r + r^2) x + (3 \ell + r) x^2 \\
&\geq (\ell+r)^3 - 2(\ell^2+3\ell r + r^2) \left(\frac{\ell^2+3\ell r + r^2}{3 \ell + r}\right) + (3 \ell + r)\left(\frac{\ell^2+3\ell r + r^2}{3 \ell + r}\right)^2 \\
&= \frac{(3 \ell + r)(\ell+r)^3 - (\ell^2+3\ell r + r^2)^2}{3 \ell + r} \\
&= \frac{2\ell^4+4\ell^3 r + \ell^2 r^2}{3 \ell + r} \geq 0,
\end{align*}
we have $Q_{\ell,r}(x) \geq 0$ $\forall x$.  On the other hand,
\begin{align*}
Q_{\ell,r}(x) - 1 &=  \frac{1}{\ell^2 (\ell+r)^3} x \big(4\ell^2+2\ell r + (3\ell + r)x \big)(-r+x)^2 \\
&\geq 0,  \textrm{    for all } x \geq 0.
\end{align*}
\end{proof}
Now, let $\E[X] = 0,$ and $\delta > 0$.  Then
\begin{align}\label{xdelta}
\E[Q_{\ell,r}(X-\delta)] &= \sum_{i=0}^4 q_i \E[(X-\delta)^i] \nonumber \\
&= \sum_{i=0}^4 q_i \E[(X)^i] + \sum_{i=1}^4 (-1)^i \delta^i q_i + (6\delta^2 q_4 - 3\delta q_3)\E[X^2] - 4\delta q_4 \E[X^3] \nonumber \\
&= \sum_{i=0}^4 (-1)^i \delta^i q_i + (q_2 - 3\delta q_3  + 6\delta^2 q_4)\E[X^2] + (q_3 - 4\delta q_4)\E[X^3] + q_4 \E[X^4].
\end{align}
Looking at the coefficients in (\ref{qis}), notice that $q_4 > 0$ always, and if $\ell \leq (1+\sqrt{3})r/2$, then $q_3 < 0$.  In fact, we will always choose $\ell \leq r$.  Therefore, we will always have
\begin{equation}\label{q3q4}
q_3 < 0, \text{ and } q_4 > 0.
\end{equation}
Thus, if $X$ satisfies the inequalities (\ref{thirdfourth}) below, then
\begin{equation}\label{xdelta1}
\E[Q_{\ell,r}(X-\delta)] \leq \sum_{i=0}^4 (-\delta)^i q_i + (q_2 - 3\delta q_3  + 6\delta^2 q_4)\sigma^2 + (q_3 - 4\delta q_4)(-\sigma^2) + q_4 (3\sigma^4 + \sigma^2).
\end{equation}
We will often let $\ell = r = \sqrt{3}\sigma$. In that case, (\ref{qisr}) becomes
\begin{equation}\label{rt3sigma}
q_0 = 1, \;
q_1 = \frac{3}{4\sqrt{3}\sigma}, \;
q_2 = - \frac{1}{3\sigma^2} , \;
q_3 = - \frac{1}{12\sqrt{3}\sigma^3}, \;
q_4 = \frac{1}{18\sigma^4}.
\end{equation}
Substituting these into (\ref{xdelta1}) and simplifying yields 
\begin{equation}\label{sqrt3sig}
\E[Q_{\sqrt{3}\sigma}(X-\delta)] \leq \frac{5}{6} + \frac{2\delta^4+\sqrt{3}\delta^3\sigma+(2+8\delta)\sigma^2+(\sqrt{3}-6\sqrt{3}\delta)\sigma^3}{36\sigma^4}.
\end{equation}

\subsection{Proof of Theorem \ref{third}}
We restate it here in an equivalent form.
\begin{theorem}
Let $X_1, \ldots, X_n$ be a $4$-wise independent collection of random variables where for each $i$, $\E[X_i] = 0$, and $|X_i| \leq 1$.  Let $X = \sum_{i=1}^n X_i$.  Then
\[ \Pr[X \geq 1/3] \leq \frac{5}{6}. \]
\end{theorem}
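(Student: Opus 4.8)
The plan is to bound $\Pr[X\ge 1/3]\le\E[Q_{\ell,r}(X-\delta)]$ for a pair $(\ell,r)$ and a shift $\delta\le 1/3$ chosen in terms of $\sigma^2:=\E[X^2]=\sum_i\E[X_i^2]$, exactly as in (\ref{P56}): since $Q_{\ell,r}\ge{\bf 1}_{\{x\ge 0\}}$ and $\delta\le 1/3$, we have $Q_{\ell,r}(x-\delta)\ge{\bf 1}_{\{x\ge\delta\}}(x)\ge{\bf 1}_{\{x\ge 1/3\}}(x)$. The $4$-wise independence together with $|X_i|\le 1$ gives, via the moment computations preceding (\ref{Q56}), $\E[X^3]\ge-\sigma^2$ and $\E[X^4]\le 3\sigma^4+\sigma^2$; combined with $q_3<0<q_4$ from (\ref{q3q4}), this is exactly what licenses the bound (\ref{xdelta1}). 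So everything reduces to choosing $(\ell,r)$, as a function of $\sigma$, so that the right-hand side of (\ref{xdelta1}) never exceeds $5/6$.

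For large $\sigma$ I would take $\ell=r=\sqrt3\,\sigma$ and $\delta=1/3$, so that (\ref{sqrt3sig}) applies directly. Substituting $\delta=1/3$ there turns the bracketed numerator into $-\sqrt3\,\sigma^3+\tfrac{14}{3}\sigma^2+\tfrac{\sqrt3}{27}\sigma+\tfrac{2}{81}$, a cubic in $\sigma$ with negative leading coefficient; it has a single positive root $\sigma_0$ (numerically $\sigma_0\approx 2.71$) and is negative for all $\sigma\ge\sigma_0$, so $\E[Q_{\sqrt3\sigma}(X-1/3)]\le 5/6$ once $\sigma\ge\sigma_0$. The value $\sqrt3$ is forced here: among choices $\ell=r=c\sigma$, the $\sigma\to\infty$ limit of the right-hand side of (\ref{xdelta1}) is $1-c^{-2}+\tfrac{3}{2}c^{-4}$, which is $\ge 5/6$ with equality only at $c=\sqrt3$ — so the $O(1/\sigma)$ corrections genuinely have to be absorbed by a positive deviation, which is the point of the shift $\delta=1/3$.

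The case $0\le\sigma<\sigma_0$ is the delicate one. When $\sigma^2\le 5/9$ the one-sided Chebyshev (Cantelli) inequality already suffices, since then $\Pr[X\ge 1/3]\le\sigma^2/(\sigma^2+1/9)\le 5/6$. When $\sqrt5/3\le\sigma<\sigma_0$, no single scale works — for any fixed $(\ell,r)$ the right-hand side of (\ref{xdelta1}) is a quartic in $\sigma$ with positive leading coefficient $3q_4\sigma^4$, hence eventually passes $5/6$ — so one cuts $[\sqrt5/3,\sigma_0)$ into finitely many subintervals and on each picks a pair $(\ell,r)$ (constant, or of the form $\ell=a\sigma$, $r=b\sigma$ with $a\le b$) keeping the right-hand side of (\ref{xdelta1}) below $5/6$ on that subinterval. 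Each such claim is an explicit one-variable polynomial inequality over a bounded interval, hence verifiable; assembling subintervals whose union is all of $[\sqrt5/3,\sigma_0)$ completes the argument.

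The main obstacle is this last step. The two extremes are easy — $\sigma$ small is plain Cantelli, $\sigma$ large is one clean cubic inequality from (\ref{sqrt3sig}) — but the intermediate band $\sigma\in[\sqrt5/3,\sigma_0]$ has no clean closed-form choice of $Q_{\ell,r}$, so one must select $(\ell,r)$ numerically on several subintervals and check that the corresponding quartics in $\sigma$ never cross $5/6$. Getting the subintervals and parameters to mesh, so that the union really covers the whole band and every quartic inequality genuinely holds, is where the real work of the proof lies.
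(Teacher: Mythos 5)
Your overall strategy is exactly the paper's: reduce to bounding $\E[Q_{\ell,r}(X-1/3)]$ via the moment inequalities $\E[X^3]\ge-\sigma^2$, $\E[X^4]\le 3\sigma^4+\sigma^2$, fix $\ell=r=\sqrt3\,\sigma$ for large $\sigma$, and split the remaining range by choosing $(\ell,r)$ interval by interval. Your analysis of why $c=\sqrt3$ is forced in the large-$\sigma$ regime, and your computation of the bracketed cubic in (\ref{sqrt3sig}) at $\delta=1/3$ with its unique positive root $\sigma_0\approx 2.71$, are both correct. Your small-$\sigma$ shortcut is genuinely nicer than the paper's: Cantelli gives $\Pr[X\ge1/3]\le\sigma^2/(\sigma^2+1/9)\le 5/6$ directly for $\sigma^2\le 5/9$, whereas the paper works to the same effect by plugging the constant pair $\ell=r=3/2$ into (\ref{xdelta1}) for $\sigma<1/2$, which only covers a shorter range and costs an extra computation.

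However, there is a genuine gap, and you flag it yourself: the band $\sigma\in[\sqrt5/3,\sigma_0)$, roughly $[0.75,2.71)$, is not actually covered. You state the right structure (tile the interval with finitely many scale-proportional choices of $(\ell,r)$, one explicit polynomial inequality per tile) but supply neither the tiles nor the pairs, and since this intermediate band is where essentially all of the work in the theorem lives, the proof is incomplete as written. The paper's proof fills exactly this hole by taking $\ell=r=a\sigma$ with $a\in\{2,9/4,5/2\}$ on $[3/2,3]$, $[1,3/2]$, $[1/2,1]$ respectively, and reduces each check to two endpoint evaluations by observing that when $27-2a^2\ge 0$ the resulting bound $B_a$ is a polynomial in $\sigma^{-1}$ with nonnegative coefficients on $\sigma^{-2},\sigma^{-3},\sigma^{-4}$, hence convex in $\sigma^{-1}$. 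Incorporating that convexity device would let you finish cleanly: once you commit to specific values of $a$, you never need to reason about an interior maximum, only two numbers per tile.
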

\begin{proof}
Let $X_1, \ldots, X_n$ be $4$-wise independent random variables with $\E[X_i] = 0$ and $|X_i| \leq 1$ for each $i$.  Let $X = \sum_{i=1}^n X_i$ (so that $\E[X] = 0$), and let $\sigma^2 = \E[X^2]$. At the end of Section \ref{moment}, we showed that since $|X_i| \leq 1$ for each $i$,
\begin{align}\label{thirdfourth}
\E[X^3] &\geq -\sigma^2, \nonumber \\
\E[X^4] &\leq 3\sigma^4 + \sigma^2.
\end{align} 
As explained in the same section, it is sufficient to show that for any such $X$, there is a choice of $\ell$ and $r$ such that
\[\E[Q_{\ell, r}(X - \delta)] \leq \frac{5}{6}.\]
For this proof, we can let $\ell = r$ for each case, so we refer to the polynomial as $Q_r$.  First, let $\ell = r = \sqrt{3}\sigma$.  Using (\ref{sqrt3sig}) with $\delta = 1/3$, we have
\[\E[Q_{\sqrt{3}\sigma}(X-1/3)] \leq \frac{5}{6} + \frac{1}{36\sigma^4}\left(\frac{2}{81}+\frac{\sqrt{3}}{27}\sigma+\frac{14}{3}\sigma^2-\sqrt{3}\sigma^3\right).\]
If $\sigma \geq 3$, then
\begin{align*}
\E[Q_{\sqrt{3}\sigma}(X-1/3)] &\leq \frac{5}{6} + \frac{1}{36\sigma}\left(\frac{2}{81}\sigma^{-3}+\frac{\sqrt{3}}{27}\sigma^{-2}+\frac{14}{3}\sigma^{-1}-\sqrt{3}\right)\\
&\leq \frac{5}{6} + \frac{1}{36\sigma}\left(\frac{2}{81}3^{-3}+\frac{\sqrt{3}}{27}3^{-2}+\frac{14}{3}3^{-1}-\sqrt{3}\right) \\
&\leq \frac{5}{6}.
\end{align*}
Now If we let $r = a\sigma$ for a constant $a > 0$, and $\delta = 1/3$, putting the coefficients of $Q$ (\ref{qisr}) into (\ref{xdelta1}) yields
\begin{equation*}
\E[Q_{a\sigma}(X-1/3)] \leq \frac{3-2a^2+2a^4}{2a^4} + \frac{2-a^2}{4a^3}\sigma^{-1} + \frac{27-2a^2}{18a^4} \sigma^{-2} + \frac{1}{108a^3} \sigma^{-3} + \frac{1}{162a^4} \sigma^{-4}.
\end{equation*}
Let $B_a(\sigma)$ be the quantity on the righthand side.  Examining the coefficients, we see that if $27 - 2a^2 \geq 0$, then $B_a$ is a convex polynomial in the variable $\sigma^{-1}$.  Thus, for a fixed $a$, and $\sigma_1 < \sigma_2$, if we show that $B_a(\sigma_1)$ and $B_a(\sigma_2)$ are both bounded above by $5/6$, then $\E[Q_{a\sigma}(X-1/3)] \leq 5/6$ for all $\sigma \in [\sigma_1, \sigma_2]$.    

First, let $a = 2$.  Then
\[ B_2(\sigma) = \frac{27}{32} - \frac{1}{16} \sigma^{-1} + \frac{19}{288} \sigma^{-2} + \frac{1}{864} \sigma^{-3} + \frac{1}{2592} \sigma^{-4}. \]
and it can be easily checked that $B_2(3/2) < 5/6$ and $B_2(3) < 5/6$. \\
If $a = 9/4$, then
\[ B_{9/4}(\sigma) = \frac{1883}{2187} - \frac{49}{729} \sigma^{-1} + \frac{80}{2187} \sigma^{-2} + \frac{16}{19683} \sigma^{-3} + \frac{128}{531441} \sigma^{-4}, \]
with $B_{9/4}(1) < 5/6$ and $B_{9/4}(3/2) < 5/6$.\\
If $a = 5/2$, then
\[ B_{5/2}(\sigma) = \frac{549}{625} - \frac{17}{250} \sigma^{-1} + \frac{116}{5625} \sigma^{-2} + \frac{2}{3375} \sigma^{-3} + \frac{8}{50625} \sigma^{-4}, \]
with $B_{5/2}(1/2) < 5/6$ and $B_{5/2}(1) < 5/6$.

Thus, we have covered all $\sigma \geq 1/2.$  Lastly, we set $r=3/2$, for which substituting (\ref{qisr}) into (\ref{xdelta1}) gives
\[\E[Q_{3/2}(X-1/3)] \leq \frac{10339}{13122} + \frac{8}{27}\sigma^4 \leq \frac{5}{6},\]
when $\sigma < 1/2$.  
\end{proof}
As we discussed in the introduction, the deviation $\delta = 1/3$ could possibly be lowered, but not to anything below $1/5$.  However, due to the small variance case, our approach cannot allow for a $\delta$ much lower than the one we set.

\section{Proof of Main Theorem}
In this Section, we prove Theorem \ref{newbound}.  First, we will need two modified versions of Theorem $\ref{third}$.  Although we will have full independence when we apply these lemmas, we only assume $4$-wise independence for maximal generality.  We treat separately the cases of negative and nonnegative third moment.  The (rather tedious) proofs of both lemmas are at the end of the section.
\subsection{Lemmas}
Due to the third-degree coefficient $q_3$ of our polynomial being negative, if we know the central third moment is positive, we can lower the allowed deviation $\delta$ from $1/3$, while keeping the same upper bound of $5/6$ on the probability.  It will be important to lower $\delta$ as much as possible, without having to raise the bound on the probability (which would not be a good tradeoff).
\begin{lemma}\label{425}
Let $X_1, \ldots, X_n$ be a $4$-wise independent collection of random variables where for each $i$, $\E[X_i] = 0$, and $|X_i| \leq 1$.  Let $X = \sum_{i=1}^n X_i$.  If $\E[X^3] \geq 0$, then
\[ \Pr[X \geq 4/25] \leq \frac{5}{6}. \]
\end{lemma}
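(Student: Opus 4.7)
The plan is to follow the pattern of the proof of Theorem \ref{third}, with $\delta$ reduced from $1/3$ to $4/25$, exploiting the hypothesis $\E[X^3] \geq 0$. Start from the identity (\ref{xdelta}) for $\E[Q_{\ell,r}(X-\delta)]$. As in Section \ref{Qlr}, take $\ell \leq r$, so that by (\ref{q3q4}) we have $q_3 < 0$ and $q_4 > 0$. Then the coefficient $q_3 - 4\delta q_4$ of $\E[X^3]$ in (\ref{xdelta}) is strictly negative, and combined with $\E[X^3] \geq 0$ this entire term is nonpositive and may be discarded. Substituting also the fourth-moment bound $\E[X^4] \leq 3\sigma^4 + \sigma^2$ from (\ref{thirdfourth}) gives
\[\E[Q_{\ell,r}(X-\delta)] \leq \sum_{i=0}^4 (-\delta)^i q_i + \bigl(q_2 - 3\delta q_3 + 6\delta^2 q_4 + q_4\bigr)\sigma^2 + 3q_4\sigma^4.\]
This is the key structural improvement over (\ref{xdelta1}), obtained by dropping the $\E[X^3]$ term entirely rather than bounding it below by $-\sigma^2$; it is exactly what permits shrinking $\delta$ below $1/3$.

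Next, as in Theorem \ref{third}, I specialize to $\ell = r = a\sigma$ for a parameter $a > 0$, use (\ref{qisr}), and denote the resulting upper bound by $B_a(\sigma)$, which is a fourth-degree polynomial in $\sigma^{-1}$. Its constant term is $1 - 1/a^2 + 3/(2a^4)$, which equals $5/6$ precisely at $a = \sqrt{3}$. So for $\sigma$ sufficiently large, the canonical choice $a = \sqrt{3}$ gives a bound of the form $5/6$ plus a correction of order $\sigma^{-1}$ and higher; one verifies directly that the (negative) $\sigma^{-1}$ coefficient dominates the higher-order positive coefficients once $\sigma$ exceeds an explicit threshold, settling the large-variance regime.

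For intermediate and small $\sigma$, I will select a finite sequence of constants $\sqrt{3} < a_1 < a_2 < \cdots < a_k$, each still small enough that $B_{a_j}$ is convex in $\sigma^{-1}$ (the analogue of the condition $27 - 2a^2 \geq 0$ used in Theorem \ref{third}). For each $a_j$, convexity reduces the verification of $B_{a_j}(\sigma) \leq 5/6$ on an interval $[\sigma_j, \sigma_{j+1}]$ to checking the two endpoints. Arranging the intervals to overlap and to cover $[\sigma_{\min}, \infty)$ handles all but the smallest variances, and for $\sigma < \sigma_{\min}$ a fixed constant choice of $r$ (in the spirit of $r = 3/2$ in Theorem \ref{third}) reduces the bound to an expression of the form $C_0 + C_4\sigma^4$ that is at most $5/6$ on the remaining range. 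The main obstacle is purely arithmetic: the smaller deviation $\delta = 4/25$ leaves less slack per interval than $\delta = 1/3$ did, so the constants $a_j$ and the endpoints $\sigma_j$ must be chosen more carefully and possibly more finely, but every individual step reduces to a routine numerical polynomial inequality.
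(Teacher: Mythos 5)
Your first step is exactly right and reproduces the paper's calculation: the hypothesis $\E[X^3]\geq 0$ together with $q_3 - 4\delta q_4 < 0$ (valid whenever $\ell\leq r$, by (\ref{q3q4})) lets you discard the third-moment term entirely, and substituting $\E[X^4]\leq 3\sigma^4+\sigma^2$ gives your displayed inequality, which is precisely (\ref{xdelta2}) in the paper. The large-variance regime with $\ell=r=\sqrt{3}\sigma$ also matches (the paper uses it for $\sigma > 5/4$), and your observation that the constant term equals $5/6$ only at $a=\sqrt{3}$ is correct.

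The gap is in your treatment of intermediate and small $\sigma$. You propose to stay on the diagonal $\ell=r$ throughout — first $a\sigma$ with $a>\sqrt{3}$, then a fixed $r$ — but this cannot work, and the paper explicitly flags the issue: \emph{``For $\sigma\in[0,5/4]$, we will be forced to choose $\ell<r$.''} The obstruction is quantitative. With $\ell=r=a\sigma$ the bound you wrote has constant term $C(a)=1-\tfrac{1}{a^2}+\tfrac{3}{2a^4}\geq\tfrac{5}{6}$ for every $a$, with equality only at $a=\sqrt{3}$; its $\sigma^{-1}$ coefficient is $\tfrac{3\delta(1-a^2)}{4a^3}<0$ (scaling with $\delta$) and its $\sigma^{-2}$ coefficient $\tfrac{\delta^2(3-a^2)+1/2}{a^4}$ is positive for all $a$ up to roughly $4.7$. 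Because $\delta=4/25$ is small, the negative $\sigma^{-1}$ term cannot overcome the positive $\sigma^{-2}$ term plus the excess $C(a)-5/6$ once $\sigma$ drops below about $3/4$: numerically, at $\sigma=0.68$ the minimum of $B_a(0.68)$ over $a$ is about $5/6+0.003$, attained near $a\approx 2.2$, so no choice of $a$ closes the bound. The fixed-$\ell=r$ fallback has the same problem — shrinking $r$ enough to make $Q_r(-\delta)<5/6$ inflates the $\sigma^2$ coefficient (your ``$C_0+C_4\sigma^4$'' also omits the nonzero $\sigma^2$ term), and the interval it covers does not reach where the scaled choices take over. The paper closes the interval $[0,5/4]$ by going off-diagonal: $(\ell,r)=(2\sigma,\tfrac{5}{2}\sigma)$ on $[0.68,1.25]$, $(\tfrac{15}{7}\sigma,3\sigma)$ on $[0.5,0.68]$, and the fixed pair $(1,2)$ on $[0,0.5]$. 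You would need to carry out the same two-parameter optimization over $(\ell,r)$ rather than along $\ell=r$ to make your proof go through.
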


For the next lemma, we will assume each random variable is supported on two points; this will be the case when we apply it in the upcoming proof.  Now, if we assume the central third moment of the sum is nonpositive and add one small condition, we can remove the assumption of a universal upper bound (intuitively, a negative central third moment implies the distributions of the random variables are already skewed below their means).  This will also be a crucial component to the proof of the theorem.  

\begin{lemma}\label{negthird}
Let $X_1, \ldots, X_n$ be a $4$-wise independent collection of random variables where for each $i$, $\E[X_i] = 0$, and $X_i$ has support $\{-a_i, b_i\}$.  Assume that $a_i \leq 1$ for each $i$, $b_1 = \max_{i}\{b_i\},$ and $a_1 \geq 1/16$.  Let $X = \sum_{i=1}^n X_i$.  If $\E[X^3] \leq 0$, then
\[ \Pr[X \geq 1] \leq \frac{5}{6}. \]
\end{lemma}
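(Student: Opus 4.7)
The plan is to apply the polynomial framework of Section \ref{Qlr} with $\delta = 1$, establishing $\E[Q_{\ell,r}(X-1)] \leq 5/6$ for a piecewise choice of $(\ell, r)$ depending on $\sigma := \sqrt{\E[X^2]}$; the desired conclusion then follows from $\Pr[X \geq 1] \leq \E[Q_{\ell,r}(X-1)]$. This mirrors the strategy of Theorem \ref{third}, but the looser deviation $\delta = 1$ (versus $1/3$) gives more slack in the polynomial bound, which is what lets us make do with the hypothesis only on $a_i$ rather than on both $a_i$ and $b_i$.

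The new structural input, not present in the proof of Theorem \ref{third}, is a bound on $b_1$. Since $\sigma^2 = \sum_i a_i b_i \geq a_1 b_1 \geq b_1 / 16$, we obtain $b_1 \leq 16\sigma^2$, and hence $|X_i| \leq \max(a_i, b_i) \leq \max(1, 16\sigma^2)$ uniformly. The two-point structure then gives $\E[X_i^3] = a_i b_i(b_i - a_i) \geq -a_i \E[X_i^2] \geq -\E[X_i^2]$, whence $\E[X^3] \geq -\sigma^2$, and a similar calculation yields $\E[X^4] \leq 3\sigma^4 + \max(1, b_1)^2\sigma^2$. Crucially, when $\sigma \leq 1/4$ we automatically get $b_1 \leq 1$, so all $|X_i| \leq 1$ and the standard fourth-moment bound $\E[X^4] \leq 3\sigma^4 + \sigma^2$ from Section \ref{moment} is available.

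I would then split into three regimes. For $\sigma$ large (roughly $\sigma \gtrsim 3/2$), setting $\ell = r = \sqrt{3}\sigma$ and applying (\ref{sqrt3sig}) with $\delta = 1$ gives a numerator $2 + \sqrt{3}\sigma + 10\sigma^2 - 5\sqrt{3}\sigma^3$ that is negative, so the bound closes immediately. For intermediate $\sigma$, I would mimic the case analysis in the proof of Theorem \ref{third}, trying $\ell = r = a\sigma$ for a short list of constants $a$ and exploiting convexity of the resulting expression in $\sigma^{-1}$ to check only endpoints of subintervals. For small $\sigma$ (below $1/4$), a constant choice of $\ell = r$ near $1$ exploits that $X - 1$ concentrates near $-1$ (where $Q_\ell$ has a double zero), leaving $O(\sigma^2)$ error terms comfortably inside the $5/6$ envelope.

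The main obstacle is the intermediate range with $b_1 > 1$, where the $\max(1, b_1)^2\sigma^2$ term in the $\E[X^4]$ bound can be of order $\sigma^6$; this is precisely where the normalization $a_1 \geq 1/16$ pays its keep, by pinning $b_1$ to no more than a constant multiple of $\sigma^2$ and thereby keeping the numerical verification tractable. The hypothesis $\E[X^3] \leq 0$ itself does not directly sharpen the polynomial bound (since with the usual choice $\ell = r$ one has $q_3 - 4\delta q_4 < 0$, so the worst case in $(q_3 - 4\delta q_4)\E[X^3]$ comes from $\E[X^3] = -\sigma^2$ regardless); its role is instead to partition the hypothesis space so that this lemma and Lemma \ref{425} together cover all signs of the central third moment in the proof of Theorem \ref{newbound}.
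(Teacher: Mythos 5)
Your scaffolding (bound the first four central moments of $X$ in terms of $\sigma$, then exhibit a short list of $Q_{\ell,r}$'s covering all $\sigma \geq 0$) matches the paper's strategy, but you have misdiagnosed the role of the hypothesis $\E[X^3] \leq 0$, and the resulting moment estimate is too weak for the polynomial method to close.

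You write that $\E[X^3] \leq 0$ ``does not directly sharpen the polynomial bound'' and that its role is only to partition cases in Theorem~\ref{newbound}. This is incorrect: in the paper, $\E[X^3] \leq 0$ is precisely what rescues the fourth-moment estimate. From the two-point structure, $\E[X^3] = \sum a_i b_i(b_i-a_i) \leq 0$ implies $\sum a_i b_i^2 \leq \sum a_i^2 b_i \leq \sigma^2$. Combined with $a_1 \geq 1/16$, this gives $b_1 = \frac{1}{\sqrt{a_1}}\sqrt{a_1 b_1^2} \leq 4\sqrt{\sum a_i b_i^2} \leq 4\sigma$, whence $\sum a_i b_i^3 \leq b_1 \sum a_i b_i^2 \leq 4\sigma^3$ and finally $\E[X^4] \leq 3\sigma^4 + 4\sigma^3 + \sigma^2$. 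Your chain instead uses only $a_1 b_1 \leq \sigma^2$ to get $b_1 \leq 16\sigma^2$, which yields $\E[X^4] \leq 3\sigma^4 + \max(1,b_1)^2\sigma^2 \leq 3\sigma^4 + 256\sigma^6$ once $\sigma > 1/4$. That is $O(\sigma^6)$, and the polynomial method cannot absorb it: with $\ell = r = a\sigma$, the term $q_4\E[X^4]$ contributes $\Theta(\sigma^2)$ while the only helpful term $q_2\sigma^2 = -1/a^2$ is $O(1)$, so the bound diverges. Scaling $r$ faster than $\sigma$ makes $|q_2|\sigma^2 \to 0$ and still fails. Your invocation of (\ref{sqrt3sig}) for large $\sigma$ is also not legitimate in this lemma, since that display was derived under the assumption $|X_i| \leq 1$ (via (\ref{thirdfourth})), which does not hold here. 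The linear-in-$\sigma$ bound $b_1 \leq 4\sigma$ (and with it the cubic correction $4\sigma^3$ to $\E[X^4]$) is the genuinely new ingredient this lemma requires, and it is exactly where $\E[X^3] \leq 0$ and $a_1 \geq 1/16$ work together.
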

The allowed deviation of $1$ and the $1/16$ assumption above can be tinkered with, but we fixed $\delta = 1$ in preparation for the theorem.

\subsection{Proof of Theorem \ref{newbound}}
We state it again, this time with a slightly better but also less nice-looking constant:
\begin{theorem}
Let $X_1, \ldots, X_n$ be nonnegative independent random variables with means $\mu_1, \ldots, \mu_n$ such that $\mu_i \leq 1$ for every $i$.  Then 
\begin{equation}\label{14}
\Pr\left[\sum_{i=1}^n X_i < \sum_{i=1}^n \mu_i + 1\right] \geq \beta,
\end{equation}
where we set $\beta = \displaystyle \frac{46}{279}e^{-4/25} \; \left(>\frac{7}{50}\right)$.
\end{theorem}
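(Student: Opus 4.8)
The plan is to pass to the worst case by a standard one-variable-at-a-time reduction, split on the sign of the third moment and on whether the variables are bounded, and feed each resulting branch into Lemma \ref{425} or Lemma \ref{negthird}; the constant $\beta$ is the worst value produced over the (finitely many) branches, and the factor $e^{-4/25}$ enters as the cost of conditioning away the ``giant atom'' configurations conjectured to be extremal. For the reduction: fixing all but one $X_i$ and optimizing over its law with mean $\mu_i$ held fixed (as in \cite{feige,4mom}, dual to the picture of Section \ref{moment}), the extremal $X_i$ is supported on two points $\{c_i,d_i\}$; moreover truncating the upper atom at $\E[X]+1$ only lowers the probability, so we may take $d_i\le\E[X]+1$. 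Centering, $Y_i:=X_i-\mu_i\in\{-a_i,b_i\}$ with $a_i=\mu_i-c_i\in(0,1]$ and $b_i=d_i-\mu_i>0$, so the hypothesis $a_i\le1$ of Lemma \ref{negthird} is automatic, and the goal is $\Pr[Y<1]\ge\beta$ where $Y=\sum_iY_i$.

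\emph{Bounded regime.} Fix a constant $\Theta$ with $\Theta/(\Theta+1)\ge e^{-4/25}$ and $4\Theta/25<1$ (there is room: $\Theta$ slightly below $25/4$ works). If $b_i\le\Theta$ for every $i$, then $|Y_i|\le\Theta$, and after rescaling by $1/\Theta$ either Lemma \ref{425} (when $\E[Y^3]\ge0$) or Lemma \ref{negthird} (when $\E[Y^3]\le0$, relabelling so $b_1=\max_ib_i$) gives $\Pr[Y\ge 4\Theta/25]\le 5/6$, hence $\Pr[Y<1]\ge\Pr[Y<4\Theta/25]\ge1/6>\beta$. The only snag is the auxiliary hypothesis $a_1\ge1/16$ of Lemma \ref{negthird}; when it fails one first conditions on $\{X_1=c_1\}$, costing a factor $b_1/(a_1+b_1)$ and raising the budget by $a_1$, and the constant $46/279=\tfrac16\cdot\tfrac{92}{93}$ is precisely what survives this in the worst case.

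\emph{Giant-atom regime.} If some $b_j>\Theta$, relabel these as $b_1,\dots,b_m$. Since $d_j\le\E[X]+1$ and $b_j>\Theta$ force $p_j=a_j/(a_j+b_j)$ to be small, conditioning all of them to their low values costs only $\prod_{j\le m}(1-p_j)\ge\exp\!\big(-\tfrac1\Theta\sum_{j\le m}a_j\big)$ while raising the deviation budget from $1$ to $1+\sum_{j\le m}a_j$; the residual sum is then over indices with $b_i\le\Theta$, so the bounded regime finishes it with room to spare. When $\sum_{j\le m}a_j$ is small this conditioning factor is $\ge e^{-4/25}$, so $\Pr[Y<1]\ge e^{-4/25}\cdot\tfrac{46}{279}=\beta$. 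When $\sum_{j\le m}a_j$ is large, the total mean $\E[X]\ge\sum_{j\le m}a_j$ is large, the giant atoms are many and each of tiny probability, and a direct estimate (the sum being a lightly loaded, compound-Poisson-like variable) puts $\Pr[Y<1]$ above the extremal threshold $1/e>\beta$.

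\emph{Main obstacle.} The heart of the matter is the giant-atom regime with $\E[Y^3]\ge0$: Lemma \ref{425} is useless without boundedness, so the large atoms must be conditioned away, yet conditioning a positively skewed giant atom to its low value can flip the sign of the residual third moment, forcing Lemmas \ref{425} and \ref{negthird} to be interleaved. One must keep the product of conditioning probabilities above $e^{-4/25}$ \emph{while} the residual sum still satisfies the hypotheses of one of the two lemmas (in particular the $a_1\ge1/16$ requirement), and the thresholds $\Theta$, $1/16$, $4/25$ are tuned so that this accounting closes; carrying it out, together with the handful of explicit numerical inequalities it spawns --- in the style of the estimates in the proof of Theorem \ref{third} --- is the bulk of the work.
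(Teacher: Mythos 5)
The high-level shape of your argument matches the paper's proof --- condition away the heavy-tailed variables, split on the sign of the residual third moment, and feed the two branches into Lemmas~\ref{425} and~\ref{negthird}, paying a factor of roughly $e^{-4/25}$ for the conditioning --- but two structural ideas that make the paper's accounting close are missing, and the places where your outline hand-waves are exactly the places where those ideas are needed.

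First, your fixed threshold $\Theta$ is the wrong object; the paper uses the He--Zhang--Zhang adaptive cut. Sorting by surplus $s_1\ge\cdots\ge s_n$ and setting
\[
k=\max\bigl\{0,\max\{j:s_j\ge\tau(\mu_1+\cdots+\mu_j)\}\bigr\},\qquad m=\textstyle\sum_{i\le k}\mu_i,
\]
one gets \emph{simultaneously} that $\Pr[\sum_{i\le k}X_i=0]\ge e^{-1/\tau}$ (a cost that is $e^{-4/25}$ no matter how many variables are conditioned, because each $s_i$ is large relative to the running mean) and that the remaining surpluses obey $s_i\le\tau(m+1)$, which is commensurate with the \emph{enlarged} deviation budget $m+1$. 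Your split into ``all $b_i\le\Theta$'' versus ``some $b_j>\Theta$'' severs this coupling: with $\Theta$ fixed, a moderate number of atoms with $b_j$ just above $\Theta$ and $a_j$ near $1$ gives a conditioning cost $\prod(1-p_j)\approx e^{-\sum a_j/(\Theta+1)}$ that can easily drop below $e^{-4/25}$, while the budget gain $\sum a_j$ is too small to compensate. Your paragraph ``When $\sum_{j\le m}a_j$ is large \ldots\ a direct estimate \ldots\ puts $\Pr[Y<1]$ above the extremal threshold'' is the acknowledgement of this hole, but the compound-Poisson heuristic is not a proof and I don't see how to make it one without reintroducing the adaptive cut.

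Second, you never invoke Feige's ``merge'' reduction, and your handling of the $a_1<1/16$ hypothesis of Lemma~\ref{negthird} relies on it implicitly. In the paper, merging the smallest means until at most one is below $1/16$ (hence all others $\ge 15/16$) guarantees that \emph{only one} index can violate $a_i\ge1/16$, so conditioning that single variable to its low value costs at most a factor $1-\tfrac{1}{48(m+1)}\ge\tfrac{92}{93}$; this is where $\tfrac{46}{279}=\tfrac16\cdot\tfrac{92}{93}$ comes from, as you correctly recognize. Without merge, arbitrarily many $a_i$ could be tiny, and your ``condition on $\{X_1=c_1\}$'' step would have to be iterated with no control on the accumulated cost. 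You also need, and do not address, the check that removing $Y_1$ leaves the residual third moment nonpositive (in the paper this follows from $b_1\ge3(m+1)>a_1$, after disposing of the complementary case by Theorem~\ref{third}). Finally, your proposed truncation of the upper atoms at $\E[X]+1$ is not one of the paper's reductions; it gives only a global, collection-dependent bound on $b_i$ and does not substitute for the $s_i\le\tau(m+1)$ control that the adaptive cut provides.
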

In his proof \cite{feige} which first established a lower bound on this probability, Feige explained via a linear programming argument that without loss of generality, we may assume that each $X_i$ is non-constant and has support of size two.  This was one aspect of his overall strategy, which was to apply a sequence of transformations to the collection of random variables, where each transformation does not increase the probability that we wish to lower bound.  The next step is to simply subtract some nonnegative amount from each $X_i$, so that it has support $\{0, c_i\}$ for some $c_i > 0$.  This step may reduce the mean $\mu_i$ but leaves the probability in (\ref{14}) unchanged.

The goal of the next transformation, which he called ``merge," was to make the means closer to one another.  With ``merge," we take the two random variables with the smallest means, say $X_i$ and $X_j$ with means $\mu_i$ and $\mu_j$, and merge them into the random variable $X' = X_i + X_j$ with mean $\mu' = \mu_i + \mu_j$.  Now $X'$ possibly has support of size up to $4$, but as before, we may reduce its size to two and align it with $0$.  For some threshold $t \leq 1/2$, we will apply ``merge" (followed by reducing the support and aligning with $0$) on the two random variables with smallest means, $\mu_i < \mu_j$, if and only if $\mu_i < t$ and $\mu_j \leq 1-t$.  Thus, we will never create a random variable with a mean larger than $1$.  Furthermore, when we have finished these transformations, we have at most one random variable with mean below $t$, in which case all other means are above $1-t$. 

\begin{proof} 
As explained in the precursor to this proof, we may assume that each $X_i$ has support $\{0, c_i\}$ for some $c_i > 0$, so that $\Pr[X_i = c_i] = \mu_i/c_i.$ For each $i$, let $s_i = c_i - \mu_i$, the ``surplus" to the mean.  We may assume
\[s_1 \geq \ldots \geq s_n.\] 
Using a trick from \cite{4mom}, fix $\tau > 0$, and define
\[ k = \max\left\{0, \max_{1 \leq j \leq n} \{j : s_j \geq \tau (\mu_1 + \ldots \mu_j) \}\right\}. \]
Let $m = \sum_{i=1}^k \mu_i$, the mean of the sum of the first $k$.  If $i > k$, then
\begin{equation}\label{bbound}
 s_i \leq s_{k+1} \leq \tau \sum_{i=1}^{k+1} \mu_i \leq \tau(m+\mu_{k+1}) \leq \tau(m+1). 
\end{equation}
Otherwise, if $i \leq k$, $s_i \geq s_k \geq \tau m.$  If $k > 0$, then
\begin{align*}
\Pr\left[\sum_{i=1}^k X_i = 0\right] &= \prod_{i=1}^k \Pr[X_i = 0] \\
&= \prod_{i=1}^k \left(1-\frac{\mu_i}{c_i}\right) \\
&= \prod_{i=1}^k \left(1-\frac{\mu_i}{s_i + \mu_i}\right) \\
&\geq \prod_{i=1}^k \left(1-\frac{\mu_i}{\tau m + \mu_i}\right) \\
&\geq \prod_{i=1}^k e^{-\mu_i/(\tau m)} = e^{-1/\tau}.
\end{align*}
The utility of this splitting of the random variables is that conditioning on the sum of first $k$ being $0$, the rest are bounded by an amount comparable to the allowed deviation.  Here in particular, we are using full (as opposed to just $4$-wise) independence of the random variables (we also implicitly used full independence during the merge operation described above).
\begin{align*}
\Pr\left[\sum_{i=1}^n X_i < \sum_{i=1}^n \mu_i + 1\right] &\geq \Pr\left[\sum_{i=1}^k X_i = 0\right] \cdot \Pr\left[\sum_{i=k+1}^n X_i < \sum_{i=1}^n \mu_i + 1\right] \\
&\geq e^{-1/\tau} \Pr\left[\sum_{i=k+1}^n X_i < \sum_{i=1}^n \mu_i + 1\right] \\
&= e^{-1/\tau} \Pr\left[\sum_{i=k+1}^n X_i < \sum_{i=k+1}^n \mu_i + (m+1)\right],
\end{align*}
and we will now focus on the latter probability.  We fix $\tau = 25/4$.  Assume $k < n$, otherwise we are done.  For $1 \leq j \leq n-k$, let $Y_j = X_{k+j} - \mu_{k+j}$, and let $n' = n - k$.  Set $Y = \sum_{j=1}^{n'} Y_j$.  Each $Y_j$ has mean $0$ and support $\{-a_j, b_j\}$ where $0 < a_j \leq 1$ and $0 < b_j \leq 25(m+1)/4.$  We break the analysis into two cases, depending on the sign of the third moment of $Y$.

\emph{Case 1}: $\E[Y^3] \geq 0.$ \\
In this case, for each $j$, let $Y'_j = \displaystyle \frac{4}{25(m+1)} Y_j,$ and $Y' = \sum_{i=1}^{n'} Y'_j.$  Note that $\E[(Y')^3] \geq 0$, and for each $j$, $|Y'_j| \leq 1$. By Lemma \ref{425},
\begin{equation*}
\Pr\left[\sum_{j=1}^{n'} Y_j < (m+1)\right] = \Pr\left[\sum_{j=1}^{n'} Y'_j < 4/25 \right] \geq \frac{1}{6}.
\end{equation*}
Thus, we have
\begin{equation*}
\Pr\left[\sum_{i=1}^n X_i < \sum_{i=1}^n \mu_i + 1\right] \geq \frac{e^{-4/25}}{6} > \beta.
\end{equation*}
Although proving this case was immediate, it required the bounding of the latter random variables and drove the choice of $\tau = 25/4$.  

\emph{Case 2}:  $\E[Y^3] < 0.$ \\
The major fact about Lemma \ref{negthird} we use in this case is that we do not need an upper bound on the $Y_j$'s.  Above we had to divide the random variables by some amount in order to apply our positive third moment lemma, which lowered the allowed deviation in our strict application of the statement.  This time, we do not have to do so, and the allowed deviation $\delta$ remains at least $1$.      

Now, each $Y_j$ has support $\{-a_j, b_j\}$, where $0 < a_j \leq 1$ for each $i$. Since $b_j = s_{k+j}$, we have $b_1 \geq \ldots \geq b_{n'}$.  If $a_1 \geq 1/16$, we can immediately apply Lemma \ref{negthird}, and we are done.  So we can assume $a_1 < 1/16$.  Since $Y_j = X_{k+j} - \mu_{k+j}$, each $a_j = \mu_{k+j}$.  Thus, $\mu_{k+1} < 1/16$.  By the stopping condition of the merge process, this means that all other means exceed $15/16$.   

We may also assume at this point that $b_1 \geq 3(m+1)$.  Otherwise, like in Case 1, we can divide by $3(m+1)$, and by Theorem \ref{third},
\begin{equation*}
\Pr\left[\sum_{j=1}^{n'} Y_j < (m+1)\right] = \Pr\left[\sum_{j=1}^{n'} Y'_j < 1/3 \right] \geq \frac{1}{6}.
\end{equation*}
Thus, considering
\[ \Pr[Y_1 = b_1] = \frac{a_1}{a_1+b_1} \leq \frac{1}{48(m+1)},\]
this variable being positive is quite unlikely, and in order to discard it, we will also condition on this not occurring.  Once we do so, we must take note that the third moment of the remaining sum is also negative, as we have subtracted from it
\[E[Y_1^3] = a_1 b_1 (b_1 - a_1) > a_1 b_1 (3 - 1/16) > 0.\]
Furthermore, for $j \geq 2,$ $a_j > 1/16$ (in fact $a_j \geq 15/16$), so we can apply Lemma \ref{negthird} to the remaining sum.  Now we consider two cases: $k = 0$ and $k \geq 1$.

If $k=0$, then $m=0$, and
\begin{align*}
\Pr \left[\sum_{i=1}^n X_i < \sum_{i=1}^n \mu_i + 1 \right] &= \Pr \left[\sum_{j=1}^{n} Y_j < 1 \right] \\
&\geq \Pr[Y_1 = 0] \cdot \Pr \left[\sum_{j=2}^{n} Y_j < 1 \right] \\
&\geq \left(1-\frac{1}{48}\right) \cdot \frac{1}{6} \quad \text{ (by Lemma \ref{negthird})} \\
& > \beta
\end{align*}
If $k \geq 1$, $m \geq \E[X_1] \geq 15/16$, and
\begin{align*}
\Pr\left[\sum_{i=1}^n X_i < \sum_{i=1}^n \mu_i + 1\right] &\geq e^{-4/25} \Pr\left[\sum_{j=1}^{n'} Y_j < m+1 \right] \\
&\geq e^{-4/25} \Pr[Y_1 = 0]\cdot \Pr\left[\sum_{j=2}^{n'} Y_j < m+1 \right] \\
&\geq e^{-4/25} \left(1-\frac{1}{48(m+1)}\right) \cdot \Pr\left[\sum_{j=2}^{n'} Y_j < 1 \right] \\
&\geq e^{-4/25} \left(1-\frac{1}{48(m+1)}\right) \cdot \frac{1}{6} \quad \text{(by Lemma \ref{negthird})}\\
&\geq e^{-4/25} \left(\frac{92}{93} \right) \left(\frac{1}{6}\right) = \beta.
\end{align*}
\end{proof}
We remark that given the tightness of Theorem \ref{third} and the lemmas in this section (which we showed in \cite{me}), one cannot achieve a constant higher than $1/6$ in Theorem \ref{newbound} with only the information of the first four moments.  The room for improvement in our work lies in the possibility of lowering $\delta = 4/25$ in Lemma \ref{425}.  We could not do so (by more than a negligible amount) in our proof below.  However, perhaps a deeper analysis could allow it.  

Furthermore, we believe a tractable approach to bridging some of the gap between our $7/50$ and the conjectured $1/e$ would be to apply a similar $2k$th moment method.  An effective dual $2k$-degree polynomial $Q$ may be similarly defined as our $Q_{\ell,r}$ but possibly with more double roots for $Q$ and $Q-1$.  In addition, $Q$ could be defined so that many of its odd-degree coefficients are $0$, at the benefit of disregarding the odd moments of those orders.

\subsection{Proofs of Lemmas}
As explained at the end of Section \ref{moment}, we will show that for any $X$ meeting the conditions, there is a choice of $\ell$ and $r$ such that (\ref{Q56}) and thus (\ref{P56}) hold.  We will also refer to properties of the polynomial $Q_{\ell, r}$ laid out in Section \ref{Qlr}.
\subsubsection{Proof of Lemma \ref{425}}
\begin{proof}
Let $X_1, \ldots, X_n$ be $4$-wise independent random variables such that for each $i$, $\E[X_i] = 0$, and $|X_i| \leq 1.$ Let $X = \sum_{i=1}^n X_i$.  This time, by assumption, we have $\E[X^3] \geq 0$.  Otherwise, $\E[X] = 0$ and $\E[X^4] \leq 3\sigma^4+\sigma^2$, as shown in Section \ref{moment}.  Now, from (\ref{xdelta}) and (\ref{q3q4}) we have
\begin{equation}\label{xdelta2}
\E[Q_{\ell,r}(X-\delta)] \leq \sum_{i=0}^4 (-\delta)^i q_i + (q_2 - 3\delta q_3  + 6\delta^2 q_4)\sigma^2  + q_4 (3\sigma^4 + \sigma^2).
\end{equation}
With $\ell = r = \sqrt{3}\sigma$,
\begin{equation*}
\E[Q_{\ell,r}(X-\delta)] \leq \frac{5}{6} + \frac{2\delta^4+\sqrt{3}\delta^3\sigma + 2\sigma^2-6\sqrt{3}\delta\sigma^3}{36\sigma^4}.
\end{equation*}
Letting $\delta = 4/25$ and $\sigma > 5/4$,
\begin{align*}
\E[Q_{\sqrt{3}\sigma}(X-4/25)] &\leq \frac{5}{6} + \frac{1}{36\sigma}\left(2\left(\frac{4}{25}\right)^4\sigma^{-3}+\sqrt{3}\left(\frac{4}{25}\right)^3\sigma^{-2}+2\sigma^{-1}-\frac{24\sqrt{3}}{25}\right)\\
&\leq  \frac{5}{6} + \frac{1}{36\sigma}\left(2\left(\frac{4}{25}\right)^4\left(\frac{4}{5}\right)^3+\sqrt{3}\left(\frac{4}{25}\right)^3\left(\frac{4}{5}\right)^2+2\left(\frac{4}{5}\right)-\frac{24\sqrt{3}}{25}\right)\\
&\leq \frac{5}{6}.
\end{align*}
For $\sigma \in [0,5/4]$, we will be forced to choose $\ell < r$. In order to mitigate some of the upcoming messiness, we refer to $\delta = 4/25$ as $\delta$.

Let $\ell = 2\sigma$ and $r = 5\sigma/2$.  Then using (\ref{xdelta2}) and (\ref{qis}), 
\[ \E[Q_{2\sigma, 5\sigma/2}(X-\delta)] \leq \frac{835}{972} - \frac{226\delta}{729}\sigma^{-1} + 
\frac{68-207\delta^2}{2916} \sigma^{-2} + \frac{11\delta^3}{243}\sigma^{-3} + \frac{17\delta^4}{729}\sigma^{-4}. \]
Since $68-207\delta^2 \geq 0$, the right-hand side is a convex polynomial of the variable $\sigma^{-1} > 0$.  One can check that when $\sigma = .68$ and when $\sigma = 1.25$ (and $\delta = .16$), it is less than $5/6$.  Therefore, 
\[\E[Q_{2\sigma, 5\sigma/2}(X-4/25)] \leq \frac{5}{6} \]
for all $\sigma \in [.68, 1.25].$ \\
Next, let $\ell = 15\sigma/7$ and $r = 3\sigma$.  Then
\[ \E[Q_{15\sigma/7, 3\sigma}(X-\delta)] \leq \frac{256957}{291600} - \frac{10633\delta}{32400}\sigma^{-1} + \frac{26411-128625\delta^2}{1749600} \sigma^{-2} + \frac{7889\delta^3}{194400}\sigma^{-3} + \frac{26411\delta^4}{1749600}\sigma^{-4}. \]
Again, this is a convex polynomial of the variable $\sigma^{-1} > 0$, since the coefficient of $\sigma^{-2}$ is positive for $\delta = 4/25$.  One can check that when $\sigma = .5$ and when $\sigma = .68$, the right-hand side is less than $5/6$.  Therefore, 
\[ \E[Q_{(15\sigma/7), 3\sigma}(X-4/25)] \leq \frac{5}{6} \]
for all $\sigma \in [.5, .68].$ \\
Lastly, let $\ell = 1$ and $r=2$.  Then 
\[ \E[Q_{1,2}(X-4/25) ] \leq \frac{62573}{78125} - \frac{59}{3375} \sigma^2 + \frac{5}{9} \sigma^4. \]
One can verify with the quadratic formula or by other means that the right-hand side is bounded above by $5/6$ when $\sigma \in [0,1/2].$ 

Overall, we have provided a suitable polynomial $Q$ for every $\sigma \geq 0$.  
\end{proof}

\subsubsection{Proof of Lemma \ref{negthird}}
\begin{proof}
Let $X_1, \ldots, X_n$ be $4$-wise independent mean-zero random variables distributed as 
\[ X_i = \begin{cases}    -a_i, &\textrm{ with probability } \displaystyle  \frac{b_i}{a_i+b_i} \\
b_i, &\textrm{ with probability } \displaystyle \frac{a_i}{a_i+b_i},
\end{cases} \]
where $a_i \leq 1$ for each $i$, $b_1 = \max_{i}\{b_i\}$, and $a_1 \geq 1/16$.  Then

\[ \E[X] = 0, \]
\[E[X^2] \colonequals \sigma^2 = \sum_{i=1}^n a_i b_i \,, \]
\begin{align*}
\E[X^3] &= \sum_{i=1}^n a_i b_i(b_i - a_i) \\
&\geq -\sum_{i=1}^n a_i^2 b_i \\
&\geq -\sum_{i=1}^n a_i b_i = -\sigma^2, 
\end{align*}
\begin{align*}
\E[X^4] &\leq 3\sigma^4 + \sum_{i=1}^n \E[X_i^4] \\
&= 3\sigma^4 + \sum_{i=1}^n a_i b_i(a_i^2+b_i^2) \\
&\leq 3\sigma^4 + \sum_{i=1}^n a_i^3 b_i + \sum_{i=1}^n a_i b_i^3 \\
&\leq 3\sigma^4 + \sigma^2 + \sum_{i=1}^n a_i b_i^3
\end{align*}

Now we will show $\sum_{i=1}^n a_i b_i^3 \leq 4\sigma^3.$  Despite the lack of an upper bound on the $b_i$'s, the nonpositivity of the third moment, along with the prescribed interval of $a_1$, brings the sum under control (the latter condition, simply put, prevents an extremely large $b_1$ being ``hidden" by an extremely small $a_1$).  First, note that $\E[X^3] \leq 0$ implies
\begin{equation*}
\sum_{i=1}^n a_i b_i^2 \leq \sum_{i=1}^n a_i^2 b_i \leq \sigma^2.
\end{equation*}
Then
\begin{align*}
\sum_{i=1}^n a_i b_i^3 &\leq b_1 \sum_{i=1}^n a_i b_i^2 \\
&\leq b_1 \sigma^2 \\
&= \sqrt{b_1^2} \sigma^2 \\
&= \left(\frac{1}{\sqrt{a_1}} \sqrt{a_1 b_1^2}\right) \sigma^2 \\
&\leq \left(4 \sqrt{\sum a_i b_i^2}\right) \sigma^2 \\
&\leq (4 \sqrt{\sigma^2}) \sigma^2 = 4 \sigma^3.
\end{align*}
From (\ref{xdelta}) and (\ref{q3q4}), we have
\begin{equation}\label{xdelta3}
\E[Q_{\ell,r}(X-1)] \leq \sum_{i=0}^4 (-1)^i q_i + (q_2 - 3q_3  + 6 q_4)\sigma^2 + (q_3 - 4q_4)(-\sigma^2) + q_4 (3\sigma^4 + 4\sigma^3+ \sigma^2).
\end{equation}
If $\ell = r = \sqrt{3}\sigma$, 
\begin{equation*}
\E[Q_{\sqrt{3}\sigma}(X-1)] \leq \frac{5}{6} + \frac{2+\sqrt{3}\sigma + 10\sigma^2+(8-5\sqrt{3})\sigma^3}{36\sigma^4}.
\end{equation*}
If $\sigma \geq 16$, 
\begin{equation*}
\E[Q_{\sqrt{3} \sigma}] \leq \frac{5}{6} + \frac{1}{36\sigma}(2(16)^{-3}+\sqrt{3}(16)^{-2} + 10(16)^{-1} + 8 - 5\sqrt{3}) \leq \frac{5}{6}.
\end{equation*}
For the rest of this proof, we will still have $\ell = r.$  We will find a $Q_r$ for each $\sigma \in [0,16].$  First, let $r = 19\sigma/10$.  From (\ref{xdelta3}), we have
\[\E[Q_{(19\sigma/10)}(X-1)] \leq \frac{218442-24885\sigma^{-1}+37800\sigma^{-2}+9500\sigma^{-3}+10000\sigma^{-4}}{260642}.\]
As in the cases in the other proofs, the right-hand side is a convex polynomial of the parameter $\sigma^{-1} > 0$.  One can check that for $\sigma = 5/2$ and $\sigma = 16$, the right-hand side is less than $5/6$.  Therefore, 
\[\E[Q_{(19\sigma/10)}(X-1)] \leq \frac{5}{6} \]
when $\sigma \in [5/2, 16].$\\
For the remaining $\sigma$, we can let $r=5$.  Then (\ref{xdelta3}) becomes
\begin{align*}
\E[Q_5(X-1)] &\leq \frac{1016 - 29\sigma^2 + 4\sigma^3 + 3\sigma^4}{1250}\\
& \leq \frac{5}{6} \textrm{ when } \sigma \in [0,5/2],
\end{align*}
and the last inequality can be verified using basic calculus.  All cases are covered.  
\end{proof}

\section{$2$- and $3$-wise independent counterexamples}\label{23wise}
\subsection{Setup}
Using notation from \cite{all1}, let $\mathcal{A}(n,k,p)$ be the set of all collections of $n$ $k$-wise independent Bernoulli random variables with equal marginal probabilities $p$.  We denote
\begin{equation}\label{newprimal}
Z_P(n,k,p,\delta) = \max_{(X_1, \ldots, X_n)  \in \mathcal{A}(n,p,k)} \Pr[X_1 + \ldots X_n \geq np + \delta].
\end{equation}
We can find the above quantity using linear programming. Let $S = X_1 + \ldots + X_n$.  Since we are interested in the symmetric event $\{S \geq np + \delta\}$, there is no loss in assuming that our identically distributed random variables are also symmetric.  Hence, our programming problem will be in the $n+1$ variables $p_0, \ldots, p_n$, where
\begin{equation}\label{pr}  
p_r \colonequals \Pr[S = r]. 
\end{equation}
Let $X \sim \Bin(n,p)$.  For $k$-wise independence to hold, it is sufficient for the moments of $S$ and $X$ to be identical up to order $k$.  Thus, we have the constraints
\begin{equation}\label{constr}
\E[X^i] = \E[S^i] = \sum_{r=0}^n r^i p_r, 
\end{equation}
for $0 \leq i \leq k$.  Letting $m=\lceil np + \delta \rceil$, our objective function is $\sum_{r = m}^n p_r.$ The dual problem is then
\begin{equation}\label{newdual}
Z_D(n,k,p,\delta) = \min_{Q \in \mathcal{P}_k} \E_{X \sim Bin(n,p)}[Q(X)],
\end{equation} 
where $\mathcal{P}_k$ is the set of univariate polynomials $Q$ of degree at most k, with 
\begin{align}
Q(i) &\geq 0 \; \forall i \in \{0, \ldots, m(d)-1\}, \textrm{ and } \\
Q(j) &\geq 1 \; \forall j \in \{m(d), \ldots, n\}.
\end{align}
By linear programming duality, $Z_P = Z_D \; (\colonequals Z)$. As explained in \cite{all1}, an optimal $Q_0$ in (\ref{newdual}) would give us information about the optimal distribution $S$ in the primal problem (\ref{newprimal}).  Assuming optimality of each,
\begin{equation*}
\sum_{i=m}^n \Pr[S=i] = Z = \E[Q_0(S)] = \sum_{i=0}^n Q_0(i) \Pr[S=i].
\end{equation*}
Thus, the support of $S$ contains only integers which are zeros of $Q_0$ as well as the $i \geq m$ where $Q_0(i) = 1$.  With this information, one can simply use the $k+1$ linear constraints to solve for the probabilities.

\subsection{$2$-wise}
We will find it convenient to set $\delta = dp$, so that $m = \lceil np + dp \rceil = \lceil (n+d)p \rceil $.
For $k = 2$, the optimal solution occurs when
\begin{align*}
p_0 &= \frac{(1-p)(m-np+p)}{m}, \\
p_m &= \frac{p(1-p)n(n-1)}{m(n-m)}, \\
p_n &= \frac{p(np-m+1-p)}{n-m},
\end{align*}
valid as long as $m \leq np + 1-p$.  Then
\begin{equation}\label{max2}
\Pr[S \geq m] = \frac{p(n+m-np-1+p)}{m}.
\end{equation}
The optimal polynomial in the corresponding dual problem (\ref{newdual}) is
\begin{equation*}
f(x) = \frac{1}{mn}\big((m+n)x - x^2\big).
\end{equation*}
Note that $f$ satisfies the conditions, $f(0) = 0, f(m) = f(n) = 1,$ and 
\begin{align*}
\E[f(X)] &= \frac{(m+n)np - \big(np(1-p) + n^2 p^2\big)}{mn} \\
&= \frac{p(n+m-np-1+p)}{m}.
\end{align*}
Therefore,
\begin{equation*}
Z(n,2,p,dp) = \frac{p(n+m-np-1+p)}{m},
\end{equation*}
when $m \leq np + 1-p.$  If $m = np + dp$ (so this number is already an integer), then this is equivalent to $p \leq 1/(d+1)$.  In this case,
\begin{equation*}
\Pr[S \geq (n+d)p] = \frac{(n+(d+1)p - 1)}{n+d}.
\end{equation*}
Setting $p = 1/(d+1)$ (to maximize the above) and assuming $m = (n+d)p = (n+d)/(d+1) \in \mathbb{Z}$ gives the simple solution of
\begin{align*}
p_0 &= \frac{d}{n+d}, \\
p_m &= \frac{n}{n+d}, \\
p_n &= 0,
\end{align*}
so that 
\begin{equation*}
\Pr[X_1 + \ldots X_n \geq (n+d)p] = \frac{n}{n+d},
\end{equation*}
which is the same bound given by Markov's inequality.
\subsection{$3$-wise}
For $k = 3$, the expressions are a little messier, so we will omit some of the details on the way to the punchline.  In this case, the support of the optimal solution is $\{p_0, p_m, p_{n-1}, p_n\}$, with
\begin{equation}\label{sol2}
\Pr[S \geq m] = \frac{p\big((n-2)(1-p)^2+m(2-p)\big)}{m},
\end{equation}
as long as $m \leq np + 1-2p$.  The optimal polynomial in the dual problem is
\begin{equation*}
g(x) = \frac{1}{n(n-1)m}\big((n^2+2mn-n-m)xy-(2n+m-1)x^2+x^3\big).
\end{equation*}  
Note that $g$ satisfies the conditions, $g(0) = 0, g(m) = g(n-1) = g(n) = 1,$ and it can be checked that $\E[g(X)]$ equals the quantity in (\ref{sol2}).  Therefore,
\begin{equation*}
Z(n,3,p,dp)  = \frac{p\big((n-2)(1-p)^2+m(2-p)\big)}{m},
\end{equation*}
when $m \leq np + 1-2p.$  If $m = (n+d)p \in \mathbb{Z}$, then this is equivalent to $p \leq 1/(d+2)$, and again $Z$ is maximized with $p$ equal that value.  With these choices, the solution simplifies to
\begin{align*}
p_0 &= \frac{(d+1)^2}{(d+2)(n+d)}, \\
p_m &= \frac{(d+1)n(n-1)}{(n+d)(n+nd-d)}, \\
p_{n-1} &= 0, \\
p_n &= \frac{1}{(d+2)(n+nd-d)},
\end{align*}
so that
\begin{equation*}
\Pr[X_1 + \ldots X_n \geq (n+d)p] = 1- \frac{(d+1)^2}{(d+2)(n+d)}.
\end{equation*}

\bigskip

\noindent {\large {\bf Acknowledgement}}

\bigskip
\noindent Thanks to Swastik Kopparty for his help and insight.

\end{document}